\newcommand{\FF}{{\mathbb{F}}}
\newcommand{\GL}{{\operatorname{GL}}}
\newcommand{\PSL}{{\operatorname{L}}}
\newcommand{\Sp}{{\operatorname{Sp}}}
\newcommand{\EE}{{\operatorname{End}}}
\newcommand{\Hom}{{\operatorname{Hom}}}
\newtheorem{thm}{Theorem}[section]
\newtheorem{theorem}[thm]{Theorem}
\newtheorem{lemma}[thm]{Lemma}
\newtheorem{cor}[thm]{Corollary}
\newtheorem{question}[thm]{Question}
\theoremstyle{definition}
\theoremstyle{remark}
\begin{document}

\title{Adequate Subgroups II}


\author{Robert Guralnick}
\address{3620 S. Vermont Ave, Department of Mathematics, University of
  Southern California, Los Angeles, CA 90089-2532, USA, phone:  213-740-3787, fax: 213-740-2424}
\makeatletter\email{guralnic@usc.edu} 

\keywords{Burnside's Lemma,  irreducible representation, p-solvable group, Galois representations}

\subjclass[2010]{Primary 20C20; Secondary 11F80}
 
\begin{abstract}  
The notion of adequate subgroups was introduced by Jack Thorne \cite{T}.
It is a weakening of the notion of big subgroup used by Wiles and Taylor 
in proving   automorphy lifting theorems for certain Galois representations.  
Using this idea, Thorne was able to prove some new lifting theorems. 
 It was shown in \cite{GHTT} that
certain groups were adequate.   One of the key aspects was the question
of whether the span of the semsimple elements in the group is the full endomorphism
ring of an absolutely irreducible module.  We show that this is the case in 
prime characteristic $p$  for
$p$-solvable groups as long the dimension is not divisible by $p$.  We also
observe that the condition holds for certain infinite groups.  Finally, we present
the first examples  showing that this condition need not hold and give a negative
answer to a question of Richard Taylor.
 \end{abstract}

\maketitle


\section{Introduction} \label{sec:intro}

Let $k$ be a field of characteristic $p$ and let $V$ be a finite dimensional vector space over $k$.
Let $\rho:G \rightarrow  \GL(V)$ be an absolutely irreducible representation.
Following \cite{T}, we say $(G,V)$ is {\it adequate}  if the following conditions hold
(we rephrase the conditions slightly):

\begin{enumerate}
\item $H^1(G,k)=0$;
\item  $p$ does not divide $\dim V$;
\item  $H^1(G, V \otimes V^*)=0$; and
\item  $\EE(V)$ is spanned by the elements $\rho(g)$ with $\rho(g)$ semisimple.
\end{enumerate}

If $G$ is a finite group of order prime to $p$ (or $G$ is an algebraic or Lie
group in characteristic zero), then it is well known that $(G,V)$ is adequate.
In  this case, condition (4) is often referred to as Burnside's Lemma.  It is a trivial
consequence of the Artin-Wedderburn Theorem. 

These conditions are a weakening of the conditions used by Wiles and Taylor 
in studying the automorphic lifts of certain Galois representations.    Thorne \cite{T} 
generalized various results assuming
these hypotheses.  We refer the reader to \cite{T} for more references and details.

In particular, it was shown  in \cite[Theorem 9]{GHTT} that:

\begin{theorem} \label{adequate}  Let $k$ be a field of characteristic $p$ and
$G$ a finite group.  Let $V$ be an absolutely irreducible faithful $kG$-module.   If  $p  \ge 2 \dim V +2$, 
then $(G,V)$ is adequate. 
\end{theorem}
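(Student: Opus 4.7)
My plan is to verify the four adequacy conditions in turn. Condition (2) is immediate since $p \ge 2\dim V + 2 > \dim V$. For the cohomological conditions (1) and (3), the preliminary observation is that, because $V$ is faithful and absolutely irreducible and any normal $p$-subgroup of $G$ acts trivially on an irreducible $kG$-module, we have $O_p(G) = 1$; together with the hypothesis this sharply constrains the composition factors of $G$. After reducing via the generalized Fitting subgroup $F^*(G) = O_{p'}(F(G)) E(G)$ to a question about quasi-simple components $L$ acting faithfully and irreducibly in dimension at most $\dim V$, one appeals to standard bounds on first cohomology --- Cline--Parshall--Scott in defining characteristic, Guralnick--Hoffman or Guralnick--Tiep type bounds in cross-characteristic, and direct verification for alternating and sporadic groups via the classification --- each of which is known to give vanishing once $p$ exceeds a linear function of the dimension of the module.

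The heart of the argument is condition (4). Write $A \subseteq \End(V)$ for the $k$-span of the semisimple elements of $\rho(G)$, and set $N = O_{p'}(G)$. All elements of $\rho(N)$ are semisimple since $|N|$ is prime to $p$, so $\rho(N) \subseteq A$; by the classical Burnside theorem applied to the semisimple module $V|_N$, the subalgebra $k[\rho(N)]$ coincides with the centralizer of $\End_{kN}(V)$ inside $\End(V)$. Thus $A$ already contains a sizable subalgebra, and the task reduces to producing enough semisimple elements outside $k[\rho(N)]$ to fill $\End(V)$. I would use Clifford theory to further reduce to the case where $V|_N$ is isotypic, so that $G/N$ acts on the multiplicity space of $V|_N$; the semisimple elements of $G$ outside $N$ then correspond via their Jordan decomposition to $p$-regular lifts of classes in $G/N$, and it suffices to show that the Brauer characters evaluated on such lifts separate the irreducible constituents of $\End(V)$ as a $kG$-module.

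I expect the main obstacle to lie in this last step: verifying, in full generality, that the semisimple elements outside $N$ really do span. This should ultimately depend on the classification of finite simple groups, used to enumerate the quasi-simple composition factors of $G/N$ permitted by the hypothesis $p \ge 2\dim V + 2$, followed by a case-by-case argument showing that in each such factor the required semisimple elements --- or, equivalently, enough $p$-regular conjugacy classes with nontrivial Brauer character on $\End(V)$ --- are present. The bound $p \ge 2\dim V + 2$ enters decisively here: it forces every $p$-element $u \in \rho(G)$ to have unipotent Jordan blocks of size at most $\dim V < p/2$, which drastically limits the unipotent support and makes the count of $p$-regular classes large enough to do the job, much in the spirit of the reductions used in \cite{GHTT}.
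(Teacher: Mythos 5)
The paper does not prove this statement at all: it is quoted verbatim from \cite[Theorem 9]{GHTT}, and what appears here is only a one-sentence summary of the ingredients of that proof. Measured against that summary, your proposal is a plan rather than a proof, and it is aimed in the wrong direction precisely at the decisive point. Your reduction for condition (4) passes through $N=O_{p'}(G)$ and Clifford theory, but in the critical case $O_{p'}(G)$ is central (e.g.\ $G$ quasisimple), so $k[\rho(N)]$ is just the scalars and the ``sizable subalgebra'' you start from is nothing; the step you explicitly defer (``producing enough semisimple elements outside $N$'') is then the entire theorem. The actual argument rests on a different structural fact, from \cite{Gu}: since $p\ge 2\dim V+2>\dim V+1$, the normal subgroup $G^+$ of $G$ generated by its elements of order $p$ is a central product of quasisimple finite groups of Lie type \emph{in characteristic $p$}. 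This is what makes the problem tractable: condition (1) follows immediately (no composition factor of order $p$), condition (3) is handled by Serre's theorem \cite{S} that $V\otimes V^*$ is semisimple when $2\dim V<p+2$ combined with defining-characteristic cohomology via \cite{J}, and condition (4) is proved using the defining-characteristic representation theory of these Lie-type groups (restricted weights, Steinberg tensor product, spanning by elements of maximal tori). Your appeal to cross-characteristic $H^1$ bounds (Guralnick--Hoffman/Guralnick--Tiep) and to ``$p$-regular lifts separating Brauer characters'' shows that you have not identified this key point: the quasisimple pieces that survive the hypothesis are Lie type in the characteristic of $k$ itself, and everything is done in defining characteristic.

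Two smaller problems. First, your closing observation that every unipotent element has Jordan blocks of size at most $\dim V<p/2$ is vacuous (blocks never exceed $\dim V$) and does not by itself bound unipotent supports or count $p$-regular classes; no mechanism is given for turning it into the spanning statement. Second, for condition (3) the relevant module $V\otimes V^*$ has dimension $(\dim V)^2$, so generic bounds of the form ``$H^1$ vanishes once $p$ exceeds a linear function of the dimension of the module'' do not apply to it directly; one really needs the semisimplicity of $V\otimes V^*$ (Serre) plus control of $H^1$ of the individual irreducible summands for groups of Lie type in defining characteristic. Only condition (2), which is immediate from $p>\dim V$, is correctly and completely handled in your proposal.
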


The proof depends on the classification of finite simple groups.  The main ingredients
include a result of the author \cite{Gu} that reduces to the problem to the case that
the subgroup of $G$ generated by elements of order $p$ is a central product
of quasisimple  finite groups of Lie type in characteristic $p$, a result of Serre \cite{S}
about complete reducibility of tensor products and results on the representation
theory of the groups of Lie type in the natural characteristic \cite{J}. 

In this note,  we consider (4) and show that this holds under some conditions (none
of these results depend upon the classification of finite simple groups). 
We say that $(G,V)$ is {\it weakly adequate}  if (4) holds.

Recall that a finite group is called $p$-solvable if every composition factor
of $G$ either has order $p$ or order prime to $p$.
It is known (cf. \cite[Theorem B]{I}) that if $G$ is $p$-solvable and $V$ is an absolutely
irreducible  $G$-module in characteristic $p$, then $G$ contains an absolutely irreducible
$p'$-subgroup, whence Burnside's Lemma immediately implies:

\begin{theorem} \label{p-solvable}  Let $G$ be a $p$-solvable subgroup, 
$k$ a field of characteristic $p$ and $V$ an absolutely irreducible $kG$-module.
If $p$ does not divide $\dim V$, then $(G,V)$ is weakly adequate.
\end{theorem}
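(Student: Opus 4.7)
The plan is to piggyback directly on the Isaacs result cited in the paragraph preceding the statement. That result supplies a subgroup $H \le G$ with $\gcd(|H|,p) = 1$ such that $V$ remains absolutely irreducible as a $kH$-module; once such an $H$ is in hand, weak adequacy of $(G,V)$ will be an immediate consequence of the classical Burnside Lemma applied to $H$, because every element of $H$ is semisimple on $V$.

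In more detail, I would first invoke \cite[Theorem B]{I} to produce an absolutely irreducible $p'$-subgroup $H$ of $G$. Next, since the order of $H$ is prime to $p$, the group algebra $kH$ is semisimple by Maschke's Theorem, and every element of $H$ has order prime to $p$, hence acts on $V$ as a diagonalizable (over $\bar k$) operator, i.e. semisimply. In particular $\rho(H)$ is contained in the set of semisimple elements $S := \{\rho(g) : g \in G,\ \rho(g) \text{ semisimple}\}$.

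Finally, because $V$ is absolutely irreducible as a $kH$-module and $H$ is a $p'$-group, the Artin--Wedderburn theorem (the statement referred to as Burnside's Lemma in the introduction) gives that $\rho(H)$ already spans $\End_k(V)$. Hence $S$ spans $\End_k(V)$, establishing condition (4) and thereby the weak adequacy of $(G,V)$.

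The only nontrivial input is the existence of the absolutely irreducible $p'$-subgroup $H$; this is precisely Isaacs' theorem and is invoked as a black box. Everything else reduces to the observation that $p'$-elements are semisimple together with the classical Burnside spanning statement, so there is no real obstacle beyond citing \cite{I} correctly.
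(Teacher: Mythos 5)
Your proposal is correct and follows essentially the same route as the paper: the author likewise derives the theorem by citing Isaacs' Theorem B for the existence of an absolutely irreducible $p'$-subgroup (in fact a $p$-complement, for which the paper also sketches an elementary proof) and then applying Burnside's Lemma, since every element of a $p'$-group acts semisimply. No gaps.
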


This allows us to answer in the affirmative a question of R. Taylor for 
$p$-solvable groups.  

\begin{cor}  \label{taylor p-solvable} Let $G$ be a $p$-solvable subgroup, 
$k$ a field of characteristic $p$ and $V$ an absolutely irreducible $kG$-module.
If $(G,V)$ satisfies conditions (1), (2) and (3) above, then $(G,V)$ is adequate.
\end{cor}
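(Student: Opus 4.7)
The plan is to observe that this corollary is an essentially immediate consequence of Theorem~\ref{p-solvable}. The definition of adequacy requires conditions (1)--(4), together with the standing assumption that $V$ is absolutely irreducible. The corollary hypothesizes (1), (2), (3) and absolute irreducibility, so only condition (4) remains to be verified.

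To obtain (4), I would invoke Theorem~\ref{p-solvable} directly. That theorem requires exactly the hypotheses that $G$ is $p$-solvable, that $V$ is an absolutely irreducible $kG$-module in characteristic $p$, and that $p \nmid \dim V$. The first two are part of the corollary's hypotheses, and the third is precisely condition (2). Thus Theorem~\ref{p-solvable} applies and yields that $(G,V)$ is weakly adequate, i.e., that $\EE(V)$ is spanned by the $\rho(g)$ with $\rho(g)$ semisimple. Combining this with the assumed (1), (2), (3) gives all four conditions in the definition of adequacy.

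There is essentially no obstacle here: the content of the corollary is already packaged in Theorem~\ref{p-solvable}, and the corollary merely records the observation that condition (4), the only nontrivial new ingredient in the definition of adequacy beyond (1)--(3), comes for free in the $p$-solvable setting once (2) is known. The only thing to check is that the hypotheses match up cleanly, which they do, so the proof reduces to a single sentence citing Theorem~\ref{p-solvable}.
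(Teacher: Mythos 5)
Your proposal is correct and matches the paper's (implicit) argument exactly: the corollary is stated immediately after Theorem~\ref{p-solvable} precisely because condition (2) supplies the hypothesis $p \nmid \dim V$ needed there, and that theorem delivers condition (4), completing the definition of adequacy together with the assumed (1)--(3).
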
 

Recall that a $kG$-module $V$ is called primitive if $G$ preserves no nontrivial
direct sum decomposition of $V$.  

We can also show:

\begin{theorem}  \label{primitive}   Let $G$ be a $p$-solvable subgroup, 
$k$ a field of characteristic $p$ and $V$ an absolutely irreducible $kG$-module.
If  $V$ is primitive, then $(G,V)$ is weakly adequate.
\end{theorem}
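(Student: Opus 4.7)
The plan is to show that $p \nmid \dim V$ and then invoke Theorem~\ref{p-solvable}. Assume $k$ is algebraically closed (which is harmless for conclusion (4)), and replace $G$ by $G/\ker\rho$ to reduce to $V$ faithful; primitivity and $p$-solvability are preserved. Any normal $p$-subgroup has nonzero fixed vectors on $V$, and these form a submodule of the irreducible $V$, so $O_p(G) = 1$.

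Set $N = O_{p'}(G)$. Primitivity forces $V|_N$ to be isotypic, say $V|_N \cong m U$ for an irreducible $kN$-module $U$. Since $N$ is a $p'$-group, $\dim U$ divides $|N|$, so $p \nmid \dim U$. Clifford theory then writes $V \cong \tilde U \otimes W$, where $\tilde U$ is an $\alpha$-projective representation of $G$ extending $U$ (for some $\alpha \in Z^2(G/N, k^\times)$) and $W$ is an $\alpha^{-1}$-projective irreducible representation of $G/N$ of dimension $m$. If $W$ were induced from a proper subgroup of $G/N$, the projection formula would show that $V$ is induced from a proper subgroup of $G$; hence $W$ is primitive, and it suffices to show $p \nmid m$.

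The core of the argument is the following claim, to be proved by induction on $|H|$: \emph{for every $p$-solvable group $H$ and every primitive irreducible projective $kH$-representation $Y$ in characteristic $p$, $\dim Y$ is coprime to $p$.} If $O_{p'}(H) \neq 1$, Clifford theory as above decomposes $Y \cong \tilde U_1 \otimes Y_1$ with $\dim U_1$ coprime to $p$ and $Y_1$ a primitive projective irreducible of the smaller $p$-solvable group $H/O_{p'}(H)$, so the induction applies. Otherwise $O_{p'}(H)=1$; either $H=1$ (with $\dim Y=1$), or $P := O_p(H) \neq 1$ by $p$-solvability. Realize $Y$ as a linear irreducible $\tilde H$-representation, where $\tilde H$ is a central extension of $H$ by a cyclic $p'$-group. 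The preimage $\tilde P$ of $P$ in $\tilde H$ is a central extension of the $p$-group $P$ by a $p'$-group, so its Sylow $p$-subgroup $P_0$ is characteristic in $\tilde P$ and therefore normal in $\tilde H$. Since $k^\times$ has no $p$-torsion, the only irreducible $kP_0$-module is the trivial one, so $Y^{P_0} \neq 0$; irreducibility of $Y$ and normality of $P_0$ give $Y^{P_0} = Y$. Thus $Y$ descends to a primitive projective representation of $H/P$, and the induction concludes.

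Applying the claim to $W$ and $H = G/N$ gives $p \nmid m$, and therefore $p \nmid \dim V$; Theorem~\ref{p-solvable} then finishes the proof. The main technical burden is the cocycle bookkeeping needed to transfer primitivity through the Clifford tensor decomposition and through the descent by $O_p$; this is routine but requires care.
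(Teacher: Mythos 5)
Your proposal is correct and follows essentially the same route as the paper: both reduce the theorem to the key lemma that a primitive module of a $p$-solvable group has dimension prime to $p$, prove that lemma by Clifford theory (homogeneous restriction to a normal $p'$-subgroup, tensor decomposition, induction), and then invoke Theorem~\ref{p-solvable}. The only differences are organizational --- you work with projective representations and induct on $|G|$ using $O_{p'}$ and $O_p$, whereas the paper passes to a $p'$-central cover, takes a minimal normal noncentral subgroup, and inducts on dimension --- and these do not change the substance of the argument.
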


Note that if $\dim V$ is a multiple of $p$, then no $p'$-subgroup can act irreducibly. 
We also can obtain some results for possibly infinite groups. 

\begin{theorem}   \label{infinite}  Let $k$ be algebraically closed of characteristic $p$.
Let $V$ be finite dimensional over $k$.  Let $\Gamma$ be an  irreducible subgroup of $\GL(V)$ with
Zariski closure $G$.    Let $G^0$ be the connected component of $G$ and $\Gamma^0 =  G^0 \cap \Gamma$.
Assume that either:
\begin{enumerate}
\item $[\Gamma:\Gamma^0]$ is not a multiple of $p$; or
\item  $\dim V$ is not a multiple of $p$ and $G/G^0$ is $p$-solvable.
\item  $V$ is primitive and $G/G^0$ is $p$-solvable.
\end{enumerate} 
Then $(G,V)$ is weakly adequate.
\end{theorem}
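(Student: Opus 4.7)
\smallskip

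The plan is to reduce to the case where $G^0$ is reductive and $V|_{G^0}$ is semisimple, then handle case~(1) directly by a density argument and cases~(2) and~(3) via Clifford theory combined with Theorems~\ref{p-solvable} and~\ref{primitive} applied to the finite $p$-solvable quotient $G/G^0$.

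First I carry out preliminary reductions. The unipotent radical $R_u(G^0)$ is characteristic in $G^0$ and hence normal in $G$; by Kolchin's theorem $V^{R_u(G^0)}$ is a nonzero $G$-invariant subspace, so it equals $V$ and $R_u(G^0)$ acts trivially, allowing me to replace $G$ by $G/R_u(G^0)$ and assume $G^0$ is reductive. Because $\Gamma \cdot G^0$ is closed and contains the Zariski dense $\Gamma$, it equals $G$; so $\Gamma/\Gamma^0 \cong G/G^0$ and $\Gamma^0$ is Zariski dense in $G^0$. The socle of $V|_{G^0}$ is $G$-invariant (as $G$ permutes simple $G^0$-submodules) and nonzero, hence $V|_{G^0}$ is semisimple.

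For case~(1) I show that the semisimple locus $G_{\mathrm{ss}} \subseteq G$ is Zariski dense. For any $g$ in a coset $\sigma = gG^0$, the Jordan decomposition $g = g_s g_u$ in $\GL(V)$ has $g_u$ of $p$-power order, so its image in $G/G^0$ has order simultaneously a $p$-power and dividing $[G:G^0]$, which is prime to $p$, and thus is trivial; so $g_u \in G^0$ and $g_s \in \sigma$. Each coset therefore contains a semisimple element $s$, and by Steinberg's theorem $Z_{G^0}(s)^0$ is reductive. The map $(h,t)\mapsto hsth^{-1}$ from $G^0 \times T$ to $\sigma$, where $T$ is a maximal torus of $Z_{G^0}(s)^0$, has image containing a dense open subset of $\sigma$ lying in $G_{\mathrm{ss}}$; so $G_{\mathrm{ss}}$ is dense in $G$, and since its complement lies in a proper closed subvariety, $\Gamma \cap G_{\mathrm{ss}}$ is Zariski dense in $G$ as well. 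For any linear subspace $L \subseteq \End(V)$ the preimage $\{g \in G : \rho(g) \in L\}$ is closed; if it contains $\Gamma \cap G_{\mathrm{ss}}$ it equals $G$, and $G$-irreducibility of $V$ then forces $L = \End(V)$.

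For cases~(2) and~(3), Clifford theory writes $V = \operatorname{Ind}_H^G(U \otimes M)$, where $W_1,\ldots,W_r$ are the isotypic components of $V|_{G^0}$, $H = \operatorname{stab}_G(W_1)$, $U$ is irreducible for $G^0$, and $M$ is a multiplicity space on which $H/G^0$ acts projectively and absolutely irreducibly. In case~(3), primitivity of $V$ forces $r=1$ and $V = U \otimes M$ with $G/G^0$ primitive on $M$; in case~(2), $p \nmid \dim V$ gives $p \nmid r$ and $p \nmid \dim M$. In case~(2), Theorem~\ref{p-solvable} applied to a suitable cover of $H/G^0$ acting on $M$ produces a $p'$-subgroup acting absolutely irreducibly, which I extend (using Hall's theorem for $p$-solvable groups in the form due to Hall--Wielandt) to a Hall $p'$-subgroup $\bar F_G$ of $G/G^0$ whose intersection with $H/G^0$ is a Hall $p'$-subgroup of $H/G^0$; since $p \nmid [G:H]$ this forces $\bar F_G$ to act transitively on $G/H$, and the preimage $F \leq G$ of $\bar F_G$ satisfies $F^0 = G^0$, $[F:F^0]$ prime to $p$, and acts absolutely irreducibly on $V$, so case~(1) applied to $(\Gamma \cap F, V)$ yields weak adequacy. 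In case~(3), Theorem~\ref{primitive} applied to a cover of $G/G^0$ acting on $M$ shows that $\End(M)$ is spanned by the factors $B_\sigma$ for cosets $\sigma$ of $G^0$ whose $M$-action $B_\sigma$ is semisimple; for each such $\sigma$, the tensor-product Jordan identity $(A \otimes B)_s = A_s \otimes B_s$ shows that the semisimple part of any $\gamma_0 \in \sigma$ again lies in $\sigma$, and Steinberg-style density within $\sigma$ then yields enough semisimple elements of $\Gamma \cap \sigma$ for their $\rho$-images to span $\End(U) \otimes k B_\sigma$; summing over the relevant cosets recovers $\End(U) \otimes \End(M) = \End(V)$.

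The main obstacle is the density step in case~(3): because $p$ may divide $[G:G^0]$, individual cosets of $G^0$ need not contain any semisimple element, and one must isolate the semisimple cosets through the projective $M$-action, lift them back to $G$ using the Jordan decomposition of tensors, and then invoke Steinberg's reductive centralizer theorem to produce density of semisimple lifts within each such coset.
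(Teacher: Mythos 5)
Your proof follows essentially the same route as the paper's: case (1) via Zariski density of semisimple elements in the $p'$-cosets of the (reductive) closure, case (2) by passing to the preimage of a $p$-complement of $G/G^0$ and invoking case (1), and case (3) via the tensor decomposition $V=U\otimes W$ coming from homogeneity of $V|_{G^0}$ together with Theorem \ref{primitive} applied to the finite $p$-solvable quotient acting on the multiplicity space. The only real difference is cosmetic: where you sketch the coset-density step directly (Jordan decomposition plus the Steinberg centralizer and the conjugation map), the paper's Lemma \ref{ss} reduces to simple algebraic groups and cites \cite{GM}.
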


The only condition that is difficult to check for adequacy in the previous results is Condition (3). 
We do improve Theorem \ref{adequate} for $p$-solvable groups.
We first observe a result from \cite{Gu}.

\begin{theorem} \label{outer}  Let $k$ be a   field of characteristic $p$.   
Let $G$ be a finite subgroup of $\GL_n(k) = \GL(V)$.  Assume that $V$
is a completely reducible $kG$-module.   If $p > n$ and is not a Fermat
prime  or   $p > n+1$,
then $G$ has no composition factors of order $p$.
\end{theorem}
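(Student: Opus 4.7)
The plan is to argue by induction on $n = \dim V$ (with $|G|$ as a tiebreaker), taking a minimal counterexample $(G,V)$: a faithful completely reducible $kG$-module satisfying the stated bound on $p$, yet with $G$ possessing a composition factor of order $p$. Every reduction below passes to strictly smaller-dimensional completely reducible modules $V'$ with $\dim V' = n' < n$, and since the weakest hypothesis in force is $p \ge n+1$, we automatically have $p \ge n'+2$, so the Fermat-prime clause need not be reinvoked inside the induction.

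First I would reduce to $V$ irreducible: if $V = V_1 \oplus V_2$ properly as $kG$-modules and $G_i$ denotes the image of $G$ in $\GL(V_i)$, then each $V_i$ is a faithful completely reducible $kG_i$-module of smaller dimension, so by induction neither $G_i$ has a composition factor of order $p$, and hence neither does the subgroup $G \le G_1 \times G_2$. Next I would reduce to $V$ primitive: if $V = W_1 \oplus \cdots \oplus W_m$ is a nontrivial $G$-permuted decomposition with $m \ge 2$, the permutation map $G \to S_m$ has kernel $K$ stabilizing each $W_i$. Since $m \le n < p$, the group $S_m$ has order prime to $p$, so $G/K$ contributes no composition factor of order $p$; and $V|_K$ decomposes as a direct sum of $K$-submodules of dimension $n/m < n$, so the previous reduction applied to $K$ together with the inductive hypothesis shows $K$ has no composition factor of order $p$ either.

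With $V$ primitive and irreducible, I would invoke the classical structure theory of primitive linear groups: every normal abelian subgroup of $G$ is cyclic and central, and $F^*(G)$ is a central product of such a cyclic group, extraspecial $q$-groups of order $q^{1+2a}$ for various primes $q$, and quasi-simple groups $L_i$. Correspondingly, $V$ decomposes projectively as a tensor product of the natural faithful modules for these factors, so each quasi-simple section of $G$ admits a faithful projective representation over $k$ of degree at most $n$. The Landazuri--Seitz lower bounds on such degrees restrict the possible simple composition factors of the $L_i$, while the normalizer in $\GL_{q^a}(k)$ of an extraspecial $q$-group of order $q^{1+2a}$ induces a subgroup of $\Sp_{2a}(q)$ (or an orthogonal analogue when $q=2$) on the Frattini quotient, whose $p$-divisibility is controlled by Zsigmondy primes. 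Combining these ingredients rules out a composition factor of order $p$ in $G$ whenever $p > n+1$.

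The main obstacle, and precisely the reason for the Fermat exception, is the configuration $q=2$, $n = 2^a$, $p = 2^a + 1$ Fermat: the normalizer in $\GL_{2^a}(k)$ of an extraspecial group of type $2^{1+2a}_{\pm}$ induces $\Sp_{2a}(2)$ (or $\OO_{2a}^{\pm}(2)$) on the Frattini quotient, and $p$ divides $2^{2a}-1$ and therefore $|\Sp_{2a}(2)|$, producing an honest composition factor of order $p$. This is why the hypothesis must rule out Fermat primes at the value $p = n+1$. Verifying that every other near-boundary configuration (other extraspecial ranks, other symplectic or orthogonal groups of Lie type, quasi-simple sections in natural and cross-characteristics) is excluded by the Landazuri--Seitz list and Zsigmondy-style prime divisor arguments is the delicate technical heart of the proof.
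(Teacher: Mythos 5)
Your proposal has the right general shape --- and you correctly locate the Fermat exception in the normalizer of an extraspecial $2$-group acting through $\Sp_{2a}(2)$ --- but it is a proof outline, not a proof: the entire case analysis in the primitive case is deferred. You write that ``combining these ingredients rules out a composition factor of order $p$'' and that verifying the near-boundary configurations ``is the delicate technical heart of the proof,'' which is precisely the part that has to be carried out. The most serious omission is the case of a quasisimple component $L$ with $p$ dividing $|L|$. There an element of order $p$ can normalize $L$ and induce an outer automorphism, and neither Landazuri--Seitz degree bounds nor Zsigmondy primes settle whether this produces a composition factor of order $p$ in $G$; the paper has to invoke the classification (via \cite[Theorem B]{Gu}) to conclude that $L$ is of Lie type in characteristic $p$ and that an order-$p$ outer automorphism must be a field automorphism, which forces $n\ge 2p$. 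When $p\nmid |L|$ one instead needs the coprime-action argument (the order-$p$ element normalizes a Sylow $r$-subgroup of $L$ and one falls back to the abelian and symplectic-type cases); your sketch does not contain either argument. A smaller but real slip: in your reduction to the irreducible case, ``$G\le G_1\times G_2$ and neither $G_i$ has a composition factor of order $p$, hence neither does $G$'' is false for arbitrary subgroups of a direct product (e.g.\ $\ZZ/5\le A_5$); you need to use that $G$ is a \emph{subdirect} product, so $\ker(G\to G_1)$ embeds as a normal subgroup of $G_2$ and Jordan--H\"older applies.

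It is also worth noting that the paper organizes the argument quite differently, and more economically. Instead of inducting on $\dim V$ and reducing to primitive irreducible modules, it converts ``$G$ has a composition factor of order $p$'' into the statement that some completely reducible subnormal subgroup $N$ has $H^1(N,k)\ne 0$, then passes to the subgroup generated by elements of order $p$ and studies the action of a \emph{single} element $g$ of order $p$ on a minimal normal noncentral subgroup $A$. The four cases ($A$ elementary abelian, of symplectic type, quasisimple of order prime to $p$, quasisimple of order divisible by $p$) are then handled by elementary orbit and Sylow arguments except for the last, which is where CFSG enters. This bypasses the structure theory of primitive linear groups and Landazuri--Seitz entirely for three of the four cases, and it is what makes the $p$-solvable corollary classification-free --- a feature your route, which leans on Landazuri--Seitz throughout, would not obviously preserve.
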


It is not difficult to extend this to the case of Zariski closed subgroups.  Also, the complete
reducibility hypothesis can be relaxed -- all we need to assume is that $G$ has no nontrivial
normal subgroup consisting of unipotent elements. This result is not explicitly stated in \cite{Gu} there but it is proved
there.  The result does depend upon the classification of finite simple groups (however, for 
$p$-solvable groups, it does not).  

It now easily follows that if $G$ is $p$-solvable and $V$ is a completely reducible $kG$-module
of small dimension, then $G$ is in fact a $p'$-group and this gives:

\begin{theorem} \label{adequate2}  Let $k$ be an algebraically closed field of characteristic
$p$.  Let $G$ be a $p$-solvable group.  Let $V$ be an irreducible 
$kG$-module.   Then $(G,V)$ is adequate if:
\begin{enumerate}
\item $p > \dim V$ with $p$ not  a Fermat prime; or
\item $p > \dim V + 1$.
\end{enumerate}
\end{theorem}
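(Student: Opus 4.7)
The plan is to reduce to the trivial case in which $|G|$ is coprime to $p$, at which point adequacy is an immediate consequence of Maschke's theorem together with the classical Burnside lemma.

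First I would apply Theorem \ref{outer} to the $kG$-module $V$. Since $V$ is irreducible it is in particular completely reducible, and by hypothesis either $p > \dim V$ with $p$ not a Fermat prime, or $p > \dim V + 1$. Theorem \ref{outer} therefore guarantees that $G$ has no composition factor of order $p$. Because $G$ is $p$-solvable, every composition factor is either of order $p$ or of order prime to $p$; eliminating the former forces every composition factor to be a $p'$-group, so $|G|$ is coprime to $p$.

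Once $|G|$ is coprime to $p$, I would verify the four conditions of adequacy in turn. Conditions (1) and (3) hold because Maschke's theorem makes $kG$ semisimple, so $H^1(G,W) = 0$ for every $kG$-module $W$, in particular for $W = k$ and $W = V \otimes V^\ast$. For condition (2), $V$ is absolutely irreducible (we may work over $k$ algebraically closed), so by Artin--Wedderburn $\dim V$ divides $|G|$, hence $p \nmid \dim V$. For condition (4), every $g \in G$ has order prime to $p$, so $\rho(g)$ is semisimple; the Jacobson density theorem (Burnside's lemma) then implies that the span of $\{\rho(g) : g \in G\}$ is already all of $\End(V)$, and a fortiori the span of the semisimple $\rho(g)$ is $\End(V)$.

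The only substantive step is the passage from the dimension/prime hypotheses to the conclusion that $G$ is a $p'$-group, and this is precisely what Theorem \ref{outer} (in its $p$-solvable incarnation, which the author points out does not need the classification) delivers. So there is no genuine obstacle in the proof; it is essentially a two-line deduction from the tool already assembled in the previous subsection, and the main point of the theorem is the sharpness of the bound on $p$ coming from Theorem \ref{outer}.
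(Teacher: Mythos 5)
Your proposal is correct and follows essentially the same route as the paper: the paper likewise invokes Theorem \ref{outer} (in the form of Corollary \ref{strong}, which combines it with $p$-solvability) to conclude that $G$ acts as a $p'$-group on $V$, and then cites the classical fact that $(G,V)$ is adequate for $p'$-groups. Your explicit verification of the four adequacy conditions via Maschke and Burnside is exactly the content the paper leaves implicit.
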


On the other hand, we present an infinite family of examples
of imprimitive absolutely irreducible $G$-modules  in characteristic
 $p$  with  $\dim V$ a multiple of $p$
(including cases where $G$ is $p$-solvable) wiht $(G,V)$  not weakly adequate.  
These are generalizations of examples of  Capdeboscq  and Guralnick.   

In order for this construction to give such examples where $p$ does not divide $\dim V$,
we were led to prove the following result in \cite{GG}:

\begin{theorem} \label{jgt}   
Let $p$ be a prime.  There exists a finite simple
group $G$ with a nontrivial Sylow $p$-subgroup $P$ such that some
coset of $P$ contains no $p'$-elements.
\end{theorem}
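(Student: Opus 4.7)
The plan is to construct, for each prime $p$, an explicit finite simple group $G$ together with a Sylow $p$-subgroup $P$ and an element $g \in G$ such that every element of $gP$ has order divisible by $p$. Because Theorem~\ref{p-solvable} (and the theorem of Isaacs underlying it) implies that a $p$-solvable group contains a $p'$-subgroup meeting every coset of its Sylow $p$-subgroup, no $p$-solvable example can exist; the construction must live in a genuinely non-solvable simple group, with the alternating groups and the groups of Lie type the natural candidates.

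My first attempt would be in the alternating groups. Take $G = \fA_n$ for $n$ a suitably large multiple of $p$ (e.g.\ $n = p^k$ or $n = cp^k$ for a small constant $c$), and let $P$ be a Sylow $p$-subgroup of $G$. Recall that a Sylow $p$-subgroup of $\fS_{p^k}$ is the $k$-fold iterated wreath product $C_p \wr \cdots \wr C_p$, acting transitively on $p^k$ points, so every cycle of every element of $P$ has length a $p$-power, and $P$ stabilises a canonical tower of block systems on the underlying set. The strategy is to choose $g$ whose cycle structure is arranged relative to this block tower so that, for each $h \in P$, the product $gh$ is combinatorially forced to contain a cycle of length divisible by $p$. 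A natural first candidate is a $g$ that cycles the top-level $P$-blocks of imprimitivity among themselves in a controlled way, so that composition with any $h \in P$ fuses the induced block action into at least one long cycle whose length is a multiple of $p$.

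The main task is then the cycle-structure verification: one enumerates the possible cycle types of $gh$ as $h$ varies over $P$ and checks that in every case some cycle length is divisible by $p$. I expect this to be the principal obstacle, since $p$-regular elements are abundant in $\fA_n$ and the desired coset is a priori very sparse, so the argument has to exploit the rigid wreath structure of $P$ in an essential way. Small primes (notably $p=2,3$) may require a separate construction or a direct check in a low-rank case that is then propagated to the general family. If the alternating-group approach proves recalcitrant for some $p$, a backup is to take $G$ to be a classical simple group of Lie type, identify a rational conjugacy class $[g]$ together with a Sylow $p$-subgroup $P$, and show by a character-theoretic or Deligne--Lusztig computation that the class-multiplication constant $|gP \cap G_{p'}|$ vanishes.
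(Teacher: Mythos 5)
This is a research plan, not a proof: the entire content of the theorem is the \emph{existence} of a coset of $P$ containing no $p'$-elements, and you never exhibit one. You name two candidate strategies, explicitly flag the verification as ``the principal obstacle,'' and leave it undone, so nothing is established. Worse, your primary strategy (alternating groups with $n$ a large multiple of $p$) is the less promising of the two: there the Sylow $p$-subgroup is an iterated wreath product of order roughly $p^{(n-1)/(p-1)}$, so you are demanding that \emph{every} one of an enormous number of products $gh$ have a cycle of length divisible by $p$. Nothing in the block-tower heuristic forces this; for a typical $h \in P$ the product $gh$ is simply a fairly generic element, and $p$-regular elements are abundant in $\fA_n$. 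You give no mechanism that rules out even a single $p'$-element in the coset, and it is not clear that alternating groups furnish examples at all.

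For comparison: the paper does not prove Theorem \ref{jgt} internally but defers it to the companion paper \cite{GG}, whose construction (as the last section of this paper indicates) takes $G = \PSL_2(q)$ with $q$ chosen so that $p$ divides $q-1$ exactly once. Then $P$ is cyclic of order $p$ inside a split torus, so the coset $gP$ has only $p$ elements and the condition to verify is finite and explicit; the verification is carried out by counting/character computations using the known character table of $\PSL_2(q)$ (and the case $p=2$ is Thompson's answer to Paige's question \cite{JGT}). Your ``backup'' plan --- a group of Lie type plus a character-theoretic computation showing $|gP \cap G_{p'}| = 0$ --- is the right neighborhood, precisely because it makes $P$ small rather than large, but you do not choose the group, the class of $g$, or perform the computation, so the gap remains.
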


Thompson \cite{JGT} verified  this for $p=2$ in response to a question
of Paige.   

Using a variation of the Theorem \ref{jgt}, we show that for any prime $p$,
Taylor's question fails  (i.e. (1), (2) and (3) do not  necessarily imply (4)).

Note that another way to produce examples with $(G,V)$ not weakly adequate
is to find absolutely irreducible $G$-modules in characteristic $p$ such
that $(\dim V)^2$ is larger than the number of $p'$-elements in $G$. 
These examples are not so easy come by.   The only primitive example we know
is with 
 $G = {^2}F_4(2)'$ (the Tits group) and $V$ the  irreducible module  of dimension
 $2048$   in characteristic $2$.    The number of elements of odd
 order in $G$ is  $3,290,625 < (2048)^2$.  So  $(G,V)$ is not weakly adequate. 
 It is easy to see that $V$ is a primitive module (since $G$ contains no proper subgroups
 of index dividing $2048$).  
 
  This suggests the following variant of the problem:

\begin{question}  \label{Q2}  Let $G$ be a quasisimple finite group and $p$ a prime.
Classify all absolutely irreducible $G$-modules in characteristic $p$ such
that the number of $p'$-elements in $G$ is less than $(\dim V)^2$.
\end{question}

In particular, $(G,V)$ cannot be weakly adequate.   We suspect that there are very few such examples. 
 
The paper is organized as follows.  In the next section, we discuss $p$-solvable groups
and prove Theorems \ref{p-solvable} and \ref{primitive}. 
In the following  sections we prove  Theorem \ref{infinite} and Theorems \ref{outer} and \ref{adequate2}.  
In the last section, we consider necessary
conditions for induced modules to be weakly adequate.  This allows us to construct
many examples that are not weakly adequate including some whose dimension
is not a multiple of the characteristic.   In particular, this allows us to give a negative 
answer to Taylor's question. 

\section{$p$-solvable Groups}

We   prove Theorems  \ref{p-solvable} and \ref{primitive}.   As noted above,
the first result  follows by \cite[Theorem B]{I} (see also \cite{fong}).  We sketch
an elementary proof of a slight generalization of what we require.

We first prove a lemma about tensor products.  The first statement is well known.

\begin{lemma} \label{tensor}  Let $G$ be a group with a normal subgroup $N$.
Let $k$ be an algebraically closed field.  Let $V = U \otimes_k W$ be a 
finite dimensional    $kG$-module
where $U$ and $W$ are  irreducible $kG$-modules.  Assume that $N$ acts irreducibly on 
$U$ and trivially on $W$.  
\begin{enumerate}
\item  $V$ is an irreducible $kG$-module; and
\item  If $N$ consists of semisimple elements  and $(G,W)$ is weakly adequate,
then $(G,V)$ is weakly adequate.
\end{enumerate}
\end{lemma}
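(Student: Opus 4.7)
Part (1) is standard: because $k$ is algebraically closed and $N$ acts (absolutely) irreducibly on $U$, while $N$ acts trivially on $W$, the $kN$-module $V = U \otimes W$ is $U$-isotypic with multiplicity space canonically identified with $W$. The $kN$-submodules of $V$ therefore correspond bijectively to $k$-subspaces $W' \subseteq W$ via $W' \mapsto U \otimes W'$, and one checks directly that $G$-stability of $U \otimes W'$ is equivalent to $G$-stability of $W'$. Since $W$ is $kG$-irreducible, only $W' = 0$ or $W' = W$ is possible, and $V$ is irreducible.

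For part (2), set $S := \mathrm{span}_k\{\rho_V(g) : \rho_V(g)\text{ semisimple}\}$; the goal is to show $S = \End(V) = \End(U) \otimes \End(W)$. Note that ``$N$ consists of semisimple elements'' is here equivalent to saying that $N$ is a $p'$-group. Two spanning statements are available: Burnside applied to the absolutely irreducible action of $N$ on $U$ gives $\mathrm{span}\,\rho_U(N) = \End(U)$, and weak adequacy of $(G,W)$ gives that the $\rho_W(g)$ with $\rho_W(g)$ semisimple span $\End(W)$. The plan is, for each such $g$, to produce enough semisimple elements of $\rho_V(G)$ whose $\End(V)$-images together span $\End(U) \otimes \rho_W(g)$, and then to let $g$ vary.

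The key step is as follows. Fix $g \in G$ with $\rho_W(g)$ semisimple and let $g_{p'} \in \langle g\rangle$ denote its $p'$-part. Because $\rho_W(g)$ has order prime to $p$, the $p$-part of $g$ lies in $\ker \rho_W$, so $\rho_W(g_{p'}) = \rho_W(g)$. Since $N$ is a normal $p'$-subgroup and $g_{p'}$ is a $p'$-element, the subgroup $N\langle g_{p'}\rangle$ is itself a $p'$-subgroup of $G$; hence every product $n g_{p'}$ with $n \in N$ has order coprime to $p$, and
\[
\rho_V(n g_{p'}) \;=\; \bigl(\rho_U(n)\,\rho_U(g_{p'})\bigr) \otimes \rho_W(g)
\]
is semisimple, so lies in $S$. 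Varying $n \in N$ and using that $\rho_U(N)\rho_U(g_{p'})$ spans $\End(U)$ (Burnside together with invertibility of $\rho_U(g_{p'})$) gives $\End(U) \otimes \rho_W(g) \subseteq S$. Finally, taking $k$-linear combinations as $g$ varies and invoking weak adequacy of $(G,W)$ yields $\End(U) \otimes \End(W) \subseteq S$, as required.

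The main obstacle, and essentially the only nontrivial point, is the promotion in the displayed step: one needs the entire coset $N g_{p'}$ to consist of $p'$-elements, so that all its images under $\rho_V$ are simultaneously semisimple. This is precisely where the hypothesis that $N$ is a normal $p'$-subgroup is used in full force, and not merely the pointwise semisimplicity of individual elements of $N$.
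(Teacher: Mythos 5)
Your proof is correct and follows essentially the same route as the paper: Burnside applied to the irreducible action of $N$ on $U$ yields $\End(U)\otimes kI$, weak adequacy of $(G,W)$ supplies semisimple elements realizing a spanning set of $\End(W)$, and the cosets $Ng$ then span $\End(V)$ by semisimple elements. Your replacement of $g$ by its $p'$-part is a welcome refinement, since it guarantees that $\rho_V(ng_{p'})$ itself, and not merely $\rho_W(g)$, is semisimple --- a point the paper's proof passes over silently.
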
 

\begin{proof}  We prove both statements simultaneously.   By assumption, 
$\EE(U) \otimes kI $ is the linear span of the images of $N$ in $\GL(U) \otimes kI$.

Since $W$ is $kG$-irreducible,  we can choose elements $g_i \in G$ such
that $g_i$ acts as $a_i \otimes b_i \in GL(U) \otimes GL(W)$ where the
$b_i$ form a basis for $\EE(W)$.  If $(G,W)$ is weakly adequate, we can furthermore
assume that the $g_i$ are semisimple elements. 

Thus, the images of the elements $Ng_i$ span $\EE(U) \otimes \EE(W) = \EE(V)$.
This shows that $V$ is an irreducible $kG$-module and that $(G,V)$ is weakly adequate
if $N$ consists of semisimple elments and the $g_i$ are semisimple 
(because then $Ng_i$ consists of semisimple elements).
\end{proof}

Note that  in (2) above,  $(G,V)$ weakly adequate implies that $(G,W)$ is weakly adequate. 

If $p$ is a prime dividing $|G|$, a subgroup $H$ is called a $p$-complement
if $p$ does not divide $|H|$ but $[G:H]$ is a power of $p$.   It is an easy
exercise to see  that the following holds (just choose a minimal normal subgroup and apply the
Schur-Zassenhaus result):

\begin{lemma}  Let  $G$ be a $p$-solvable group.  Any $p'$-subgroup of $G$ is contained
in a $p$-complement and all $p$-complements are conjugate.
\end{lemma}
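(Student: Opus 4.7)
The plan is to follow the hint and induct on $|G|$, using a minimal normal subgroup $N$ of $G$. Since $G$ is $p$-solvable, $N$ is either an elementary abelian $p$-group or a $p'$-group, and the two cases are handled by slightly different applications of Schur--Zassenhaus.

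In the case $N$ is a $p'$-group, I would note first that any $p$-complement $K$ of $G$ must contain $N$: indeed $NK/K$ has order dividing both $|N|$ (a $p'$-number) and $[G:K]$ (a $p$-power), hence $N \le K$. Given a $p'$-subgroup $H$ of $G$, the image $HN/N$ is a $p'$-subgroup of $G/N$, which by induction lies in a $p$-complement $\bar{K} = K/N$; then $K$ itself is a $p$-complement of $G$ (its order $|N| \cdot |\bar{K}|$ is coprime to $p$) containing $H$. Conjugacy of two $p$-complements $K_1, K_2$ of $G$ follows from the inductive conjugacy of their images $K_i/N$ in $G/N$.

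In the case $N$ is a $p$-group, for a $p'$-subgroup $H$ of $G$ I would pass to $G/N$, find by induction a $p$-complement $\bar K = K/N$ of $G/N$ containing $HN/N$, and so have $H \le K$. Now $N \trianglelefteq K$ with $N$ a $p$-group and $K/N$ a $p'$-group, so Schur--Zassenhaus applies to the extension $1 \to N \to K \to K/N \to 1$: it splits, and since $N$ is solvable, all complements of $N$ in $K$ are $K$-conjugate and every $p'$-subgroup of $K$ lies in one of them. Thus $H$ lies in some complement $L$ of $N$ in $K$, and $[G:L] = [G:K]|N|$ is a power of $p$, so $L$ is a $p$-complement of $G$ containing $H$. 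For conjugacy, if $L_1, L_2$ are two $p$-complements of $G$, their images $L_i N/N$ in $G/N$ are $p$-complements and are conjugate by induction; after replacing $L_2$ by a conjugate we may assume $L_1 N = L_2 N$, and then $L_1, L_2$ are two complements of $N$ in this common subgroup, hence are conjugate by Schur--Zassenhaus.

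I do not expect any genuine obstacle here: the result is classical (Hall's theorem for $p$-solvable groups), and both the existence and conjugacy statements reduce cleanly to the minimal-normal-subgroup step via Schur--Zassenhaus. The only mild care needed is remembering that the conjugacy half of Schur--Zassenhaus requires solvability of either the kernel or the quotient, which is automatic in the present $p$-solvable setting (the kernel is a $p'$-group or a $p$-group in the two respective cases, and $p$-groups are solvable while odd-order groups are solvable by Feit--Thompson; but in fact one avoids Feit--Thompson entirely because in the $p'$-kernel case one can also argue via the quotient $K/N$ being a $p$-group, which is solvable).
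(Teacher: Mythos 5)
Your proof is correct and follows exactly the route the paper indicates (the paper leaves this as an exercise with the hint ``choose a minimal normal subgroup and apply Schur--Zassenhaus,'' which is precisely your induction). No issues.
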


We state the next result for irreducible groups rather than absolutely irreducible groups.  
Most results in the literature assume the latter.

\begin{lemma}  Let $G$ be a $p$-solvable group, $k$ a field of characteristic $p$
and $V$ an irreducible $kG$-module.  Let $F = \EE_G(V)$.  Assume that $p$ does not divide
$\dim_F V$.  Then a $p'$-complement $H$ of $G$ acts irreducibly on $V$ and 
$F = \EE_H(V)$.
\end{lemma}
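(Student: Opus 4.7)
The plan is to proceed by induction on $|G|$; the base case $G = H$ (i.e.\ $|G|$ prime to $p$) is trivial. For the inductive step, let $N$ be a minimal normal subgroup of $G$. If $N$ is an elementary abelian $p$-group, then $V^N$ is a nonzero $G$-submodule of $V$ and thus equals $V$, so $N$ acts trivially. The pair $(G/N, V)$ has unchanged $F$ and $\dim_F V$, and $HN/N \cong H$ is a $p$-complement of $G/N$ (since $\gcd(|H|,|N|) = 1$); induction applied to $(G/N, V)$ concludes this case.

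Now assume $N$ is a $p'$-group. By the preceding lemma, every $p'$-subgroup lies in a $p$-complement, and since $N$ is normal this forces $N \le H$. Decompose $V|_N = V_1 \oplus \cdots \oplus V_s$ into $N$-isotypic components (semisimple since $|N|$ is prime to $p$); the group $G$ permutes them transitively by irreducibility of $V$. If $s > 1$, let $L = \operatorname{Stab}_G(V_1) \lneq G$. Then $V \cong \operatorname{Ind}_L^G V_1$ with $V_1$ irreducible over $L$, and Mackey (together with the fact that distinct $V_i$ have non-isomorphic $N$-constituents, so cross double-coset terms vanish) gives $\EE_L(V_1) = F$. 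Hence $[G:L] = s$ divides $\dim_F V$ and is prime to $p$. A standard Hall-subgroup argument---replace $H$ by a conjugate so that $H \cap L$ is a $p$-complement of $L$, which then forces $HL = G$---yields $V|_H \cong \operatorname{Ind}_{H \cap L}^H V_1$. The inductive hypothesis applied to $(L, V_1)$ gives $V_1|_{H \cap L}$ irreducible with $\EE_{H \cap L}(V_1) = F$, and Mackey's irreducibility criterion---using once more that distinct $V_i$ remain non-isomorphic as $(H \cap L)$-modules since $N \le H \cap L$---then transports the conclusion to $V|_H$.

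The remaining case is $s = 1$, i.e.\ $V|_N$ is homogeneous. Here I pass to the algebraic closure $\bar k$ and let $U$ be a $G$-invariant absolutely irreducible $\bar k N$-constituent of $V \otimes \bar k$. By the classical extension theorem for $G$-invariant irreducible modules over a normal $p'$-subgroup of a $p$-solvable group, $U$ extends to an absolutely irreducible $\bar k G$-module $\hat U$, and Clifford theory gives $V \otimes \bar k \cong \hat U \otimes_{\bar k} W$ with $W$ an irreducible $\bar k[G/N]$-module. Since $\dim U$ divides $|N|$ and is thus prime to $p$, and $p \nmid \dim V$, we have $p \nmid \dim W$; by the inductive hypothesis applied to $(G/N, W)$ (valid as $|G/N| < |G|$), the $p$-complement $H/N$ acts absolutely irreducibly on $W$. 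Lemma~\ref{tensor}(1), applied to $H$ with its normal subgroup $N$, then gives that $(V \otimes \bar k)|_H$ is absolutely irreducible over $\bar k$; descent to $k$ (tracking the division ring $F$ under base change) yields $\EE_H(V) = F$.

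The main obstacle I anticipate is the extension step $U \rightsquigarrow \hat U$ in the homogeneous subcase: the cohomological obstruction in $H^2(G/N, \bar k^\times)$ must be controlled using that its order divides $\dim U$ (a $p'$-integer) and that $G$ is $p$-solvable. The remaining bookkeeping---Mackey irreducibility, the Hall-subgroup verification of $HL = G$, and descent from $\bar k$ back to $k$---should be routine.
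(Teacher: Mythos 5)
Your overall strategy coincides with the paper's: induct on $|G|$, pass to a minimal normal $p'$-subgroup $N$, and split according to whether $N$ acts homogeneously. Your imprimitive case ($s>1$) is correct and is essentially the paper's argument recast in Mackey language: the paper argues directly that a nonzero $H$-submodule meets $V_1$ and is therefore everything (using $N\le H$, the irreducibility of $V_1$ under a $p$-complement of the stabilizer, and $G=SH$), while you compute $\EE_H(\mathrm{Ind}_{H\cap L}^H V_1)$ via double cosets; the content is the same. Your choice to carry $F=\EE_G(V)$ through the induction, rather than first reducing to the absolutely irreducible case over $F$ and descending only at the end as the paper does, is also fine.

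The genuine gap is in the homogeneous case. There is no ``classical extension theorem'' asserting that a $G$-invariant absolutely irreducible module $U$ of a normal $p'$-subgroup of a $p$-solvable group extends to $G$, and your proposed repair --- that the obstruction class in $H^2(G/N,\bar k^\times)$ has order dividing $\dim U$ --- is false. The standard counterexample already defeats it: with $R$ extraspecial of order $r^{1+2a}$ ($r\ne p$) sitting inside a suitable $p$-solvable $G$ with $N=Z(R)$ minimal normal, the faithful linear character of $N$ is $G$-invariant and of degree $1$, yet does not extend even to $R$ (every linear character of $R$ kills $R'=Z(R)$); its obstruction class has order $r$. This symplectic-type situation genuinely arises under the hypotheses of the lemma, so the step cannot be ``controlled'' as you hope. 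The correct device, and the one the paper uses (citing Curtis--Reiner, Theorem 51.7), is to pass to a $p'$-central cover $\tilde G$ of $G$ on which the relevant projective representation linearizes, so that $V\cong U\otimes_k W$ as a $\tilde G$-module with $N$ irreducible on $U$ and trivial on $W$; one then applies the inductive hypothesis to $W$ and Lemma \ref{tensor}(1), and notes that irreducibility under a $p$-complement of $\tilde G$ yields irreducibility under $H$ because the central $p'$-kernel acts by scalars. With that substitution in place of your extension step, the rest of your argument goes through.
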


\begin{proof}  First suppose that $k=F$ (i.e. $V$ is absolutely irreducible).
So we may assume that $k$ is algebraically closed.
We may also assume that $O_p(G)=1$ (since this acts trivially on $V$).  
Let $N$ be a minimal normal subgroup of $G$.  So $N$ is a $p'$-group.
First suppose that $N$ does not act homogeneously on $V$ (i.e.  $N$ has
at least two nonisomorphic simple submodules on $V$).  Then we can write
$V = \oplus_{i=1}^t  V_i$, where the $V_i$ are the homogeneous components of
$N$.  Let $S$ be the stabilizer of $V_1$.  Since $\dim V = t \dim V_1$,  $p$ is prime to
both $\dim V_1$ and $t$.   Let $K$ be a $p$-complement in $S$ and $H \ge K$
a $p$-complement of $G$.  By induction, $K$ is irreducible on $V_1$.  Since
$G =SH$ (since $[G:S] =t$ is prime to $p$), $H$ acts transitively on the set of $V_i$. 

Let $W$ be a nonzero $H$-submodule of $V$.  Since $N \le H$,  
$W  = \oplus (W \cap V_i)$ and since $H$ is transitive on the $V_i$,
we see that $W \cap V_1 \ne 0$.  Since $K$ acts irreducibly on $V_1$,
$V_1 \le W$ and since $H$ is transitive on the $V_i$,  $W=V$, whence the result.

Suppose that $N$ acts homogeneously.  It follows (cf.  \cite[Theorem 51.7]{CR}) that
(passing to a $p'$-central cover if necessary),  $V  \cong U \otimes_k W$
where $U, W$ are irreducible $kG$-modules with $N$ irreducible on $U$
and trivial on $W$.  If $H$ is a $p$-complement, then  by induction, 
$U$ and $W$ are irreducible $kH$-modules.   By Lemma \ref{tensor}, this implies
that $H$ acts irreducibly on $V$.

Now suppose that $k$ is not $F$.  Since $G$ is finite, we can assume
that $k$ is a finite field.  We can view $V$ as an absolutely irreducible $FG$-module,
  By the proof 
above,  $V$ is absolutely irreducible as an $FH$-module.  Thus,
$F =\EE_H(V)$.  Since $V$ is a semisimple $kH$-module (by Maschke's
theorem) with endomorphism ring a field, $V$ is an irreducible $kH$-module.
\end{proof}

Of course, if $p$ does divide $\dim_F  V$, then $V$ cannot possibly be
irreducible  restricted to  $H$, since the dimension of any absolutely irreducible
$H$-module in characteristic $p$ divides $|H|$.  Isaacs \cite{I} proves
much more than we do above and in particular studies the restriction
of $V$ to $H$ in all cases.   These ideas are related to the Fong-Swan
theorem:  every absolutely irreducible $G$-module is the reduction
of a characteristic zero module.

Theorem \ref{p-solvable} now follows by Burnside's Lemma.
Theorem \ref{primitive} now follows from the following observation:

\begin{lemma}  Let $G$ be a $p$-solvable group with $k$ algebraically
closed of characteristic $p$. If $V$ is a primitive $kG$-module,
then $p$ does not divide $\dim V$.
\end{lemma}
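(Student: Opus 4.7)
I would argue by induction on $|G|$ via Clifford theory relative to a minimal normal subgroup. The fixed-point space of $O_p(G)$ on $V$ is nonzero (every $p$-group on a characteristic-$p$ module has a fixed vector) and $G$-stable, hence by irreducibility $O_p(G)$ acts trivially, so we may replace $G$ by $G/O_p(G)$ and assume $O_p(G)=1$. Since $G$ is $p$-solvable, any minimal normal subgroup $N$ of $G$ is then an elementary abelian $q$-group with $q\neq p$; the case $N=1$ reduces to the trivial group, so take $N\neq 1$.

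Restricting $V$ to $N$, let $V|_N=\bigoplus_{i=1}^t V_i$ be the decomposition into $N$-homogeneous components, permuted transitively by $G$. If $t>1$ this would be a nontrivial $G$-invariant direct sum decomposition, contradicting primitivity of $V$; hence $t=1$ and $N$ acts homogeneously on $V$. Invoking \cite[Theorem 51.7]{CR}, after replacing $G$ by a $p'$-central extension $\tilde G$ if necessary (still $p$-solvable, with $V$ still primitive), I obtain $V\cong U\otimes_k W$ as $k\tilde G$-modules, where the preimage $\tilde N$ of $N$ in $\tilde G$ acts irreducibly on $U$ and trivially on $W$. Since $\tilde N$ is a $p'$-group acting irreducibly on $U$, $\dim U$ divides $|\tilde N|$ and is in particular coprime to $p$.

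Now $W$ descends to an irreducible module for the $p$-solvable quotient $\tilde G/\tilde N\cong G/N$, which has order strictly less than $|G|$. I would check that $W$ is primitive there: any decomposition $W\cong\operatorname{Ind}_{H/N}^{G/N}(W_0)$ with $N\le H<G$ would, via the projection formula, give $V\cong\operatorname{Ind}_{\tilde H}^{\tilde G}(U|_{\tilde H}\otimes W_0)$, contradicting primitivity of $V$. By the inductive hypothesis applied to the primitive $k(G/N)$-module $W$, $p\nmid\dim W$, whence $p\nmid\dim V=\dim U\cdot\dim W$.

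The main obstacle I anticipate is the projective/central-extension bookkeeping in the tensor-decomposition step: one must check that $\tilde G$ remains $p$-solvable (automatic, since the central kernel can be taken to be a $p'$-group), that primitivity transfers from $V$ on $G$ to $V$ on $\tilde G$ and then forces primitivity of $W$ on $G/N$, and that the inductive measure indeed decreases -- which works because $W$ factors through $G/N$ rather than through the (possibly larger) cover $\tilde G$, so the induction on $|G|$ closes cleanly.
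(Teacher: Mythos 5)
Your proof is correct and follows essentially the same route as the paper's: reduce modulo $O_p(G)$, use primitivity to force a normal $p'$-subgroup to act homogeneously, apply the tensor decomposition of Clifford theory over a $p'$-central cover so that one factor has dimension dividing the order of a $p'$-group, and induct on the other factor. One small correction: since a $p$-solvable group may have nonabelian simple $p'$-composition factors, a minimal normal subgroup need not be elementary abelian --- but your argument only uses that $N$ is a nontrivial normal $p'$-group, so nothing is affected.
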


\begin{proof}   As above, we may assume that $O_p(G)=1$.  Let
$N$ be a minimal normal noncentral subgroup of $G$.  
Then $N$ is a $p'$-group and acts homogeneously on $V$.
If $N$ acts irreducibly, then $\dim V$ divides $|N|$ and the result
holds.  Otherwise, $V = U \otimes_k W$ where $U$ and $W$ are
primitive $kG$-modules, whence the result follows by induction
on dimension.
\end{proof}

We now give an example to show that  conditions (1), (2) and (4) do not
guarantee that condition (3) holds (even for solvable groups).

Let $r \ne p$ be an odd prime.  Let $R$ be an extraspecial $r$-group
of exponent $r$ and order $r^{1+2a}$.   Let $s$ be a prime distinct
from $p$ and $r$.  Let $S$ be an $s$-group with a faithful absolutely
irreducible $\FF_pS$ module $W$.  Let $X$ be an irreducible $\FF_pS$-submodule
of the semisimple module $W \otimes W^*$.   Set  $K = XS$, a semidirect product.   We can choose $a$ sufficiently
large so that $K$ embeds in $\Sp(2a,r)$ and so $K$ acts as a group of automorphisms
of $R$.   Then $RK \le R\Sp(2a,r)$ has an irreducible module  $U$
over $k$ of dimension $p^a$.   Set $V = U \otimes_k W$  (where we extend scalars and
view $W$ over $k$).     Then $V \otimes V^* \cong (U \otimes U^*) \otimes (W \otimes W^*)$.
Note that $V = V^R \oplus [R,V]$. and $V^R \cong W \otimes W^*$.
Thus, $H^1(G,V \otimes V^*)= H^1(G/R, W \otimes W^*) \cong \Hom_S(X, W \otimes W^*) \ne 0$.

\section{Infinite Groups}

Let $k$ be an algebraically closed field of characteristic $p \ge 0$.
Let $\Gamma$ be an absolutely irreducible subgroup of $\GL_d(k) = \GL(V)$.
Let $G$ be the Zariski closure of $\Gamma$.  Let $G^0$ denote
the connected component of $1$ in $G$.  Set $\Gamma^0 = \Gamma \cap G^0$.
Note that $G = \Gamma G^0$, whence $G/G^0 \cong \Gamma/\Gamma^0$.

We first note:

\begin{lemma}  \label{ss}  Let $G$ be a reductive algebraic  group
over   $k$  
(i.e. $G^0$ is reductive).  Let $g_i \in G, 1 \le i \le r$ be  such that  the order of
$g_iG^0/G^0$ is not a multiple of $p$.     Then
$X:=\{ x \in G^0 | \  g_i x \  \text{is \ semisimple} \}$ contains a Zariski open dense subset
of $G^0$.
\end{lemma}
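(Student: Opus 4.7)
The plan is to treat each $g_i$ separately, since a finite intersection of Zariski open dense subsets of the irreducible variety $G^0$ is again open dense. Fix $g = g_i$, let $n$ denote the order of $gG^0$ in $G/G^0$, and write the Jordan decomposition $g = g_s g_u$ with $g_s$ semisimple and $g_u$ unipotent (commuting).

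First I would produce a semisimple element in the coset $gG^0$. Passing to $G/G^0$, the image $\bar g = \bar{g_s}\bar{g_u}$ is the Jordan decomposition of $\bar g$, and $\bar{g_u}$ has $p$-power order since it is unipotent. The hypothesis $p \nmid n$ forces $\bar{g_u}=1$, so $g_u \in G^0$ and hence $g_s \in gG^0$. (Without this, the coset can consist entirely of non-semisimple elements, as happens with $G=\ZZ/p$ in characteristic $p$.) Now set $H:=C_{G^0}(g_s)^0$, which is connected reductive by Steinberg's theorem since conjugation by the semisimple element $g_s$ is a semisimple automorphism of $G^0$. Let $T$ be a maximal torus of $H$; for every $t\in T$, the element $g_s t$ is a commuting product of two semisimple elements and hence semisimple.

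I would then consider the morphism
\[
\phi: G^0 \times T \longrightarrow gG^0, \qquad (x,t) \longmapsto x\,(g_s t)\,x^{-1},
\]
whose image is a $G^0$-conjugation-invariant set of semisimple elements of $gG^0$. The main task is to show $\phi$ is dominant; granted this, Chevalley's theorem shows the image is constructible, a dense constructible subset of the irreducible variety $gG^0$ contains a Zariski open dense subset, and its preimage under the isomorphism $x\mapsto gx:G^0\to gG^0$ is the required $X \subseteq G^0$.

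The main obstacle is the dominance, which I would reduce to the generic centralizer identity $C_{G^0}(g_s t)^0 = T$ for $t$ outside a proper closed subset of $T$. Writing $\sigma$ for conjugation by $g_s$ (a semisimple automorphism, and so acting with finitely many eigenvalues on $\mathrm{Lie}(G^0)$), one checks that any $c \in C := C_{G^0}(g_s t)^0$ satisfies $\sigma(c) = t^{-1} c t$. Decomposing $\mathrm{Lie}(C)$ into $T$-weight spaces, on each nonzero weight space $\mathrm{Lie}(C)_\alpha$ the scalar $\alpha(t)^{-1}$ must equal one of the finitely many eigenvalues of $\sigma$; for $t$ avoiding a proper closed subset of $T$, $\alpha(t)^{-1}$ misses this finite spectrum for every root $\alpha$ of $T$ on $\mathrm{Lie}(G^0)$, forcing each nonzero weight space of $\mathrm{Lie}(C)$ to vanish and hence $C=T$. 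A standard dimension count $\dim(\mathrm{im}\,\phi) = (\dim G^0 - \dim T) + \dim T = \dim G^0$ then finishes the argument.
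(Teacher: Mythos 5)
Your argument is correct, but it takes a genuinely different and more self-contained route than the paper. The paper reduces to $r=1$, then to the case that $G^0$ is simple and $g_1$ is either inner or lies in the coset of a graph automorphism; the inner case is handled by density of regular semisimple elements, and the graph-automorphism case by citing \cite[Lemma 6.8]{GM}. You instead argue uniformly: extract a semisimple representative $g_s$ of the coset (this is precisely where the hypothesis $p\nmid n$ enters --- a point the paper leaves implicit in its reduction), take a maximal torus $T$ of $C_{G^0}(g_s)^0$ (reductive by Steinberg), and show the conjugation morphism $\phi:G^0\times T\to gG^0$ is dominant via a generic-centralizer computation. What this buys is independence from the classification of the outer cosets and from the external reference; the cost is that two steps are stated a bit loosely. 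First, killing the nonzero $T$-weight spaces of $\mathrm{Lie}(C)$ only gives $\mathrm{Lie}(C)\subseteq \mathfrak{g}^{T}\cap\mathfrak{g}^{\mathrm{Ad}(g_s)}=\mathrm{Lie}(C_H(T))=\mathrm{Lie}(T)$ (using smoothness of centralizers of semisimple elements), so ``hence $C=T$'' needs this extra half-sentence about the zero weight space before you can conclude from $T\le C$ and $\dim C\le\dim T$. Second, the dimension count $\dim(\mathrm{im}\,\phi)=(\dim G^0-\dim T)+\dim T$ should be justified: either show the generic fiber of $\phi$ has dimension $\dim T$ (if $xg_stx^{-1}=g_st_0$ with both centralizers equal to $T$, then $x\in N_{G^0}(T)$, so only finitely many $t$ occur over a given generic point), or, more cleanly, check that the differential of $\phi$ at $(1,t_0)$ is surjective when $C_{G^0}(g_st_0)^0=T$, since $\mathrm{Ad}(g_st_0)$ is semisimple and $(1-\mathrm{Ad}(g_st_0))\mathfrak{g}$ is then a complement to $\mathrm{Lie}(T)$. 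Both repairs are routine, so I regard the proposal as a valid alternative proof rather than a gap.
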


\begin{proof}  Since the intersection of finitely many open dense sets is open
and dense, it suffices to proves this for $r=1$.  A straightforward argument
reduces this to the case that $G^0$ is a simple algebraic group and $g_1$
is either inner or is in the coset of a graph automorphism.  If $g_1$ is inner,
the result follows since the set of regular semisimple elements is open and dense.
If $g_1$ is a graph automorphism, the same is true --  see \cite[Lemma 6.8]{GM}. 
\end{proof}

Applying this to the Zariski closure of $\Gamma$, we immediately obtain:

\begin{cor} \label{ss2}  Let  $g_i$ be a finite set of  elements of  $\Gamma$ such
that  none of  the orders of 
$g_i\Gamma^0$ in  $\Gamma/\Gamma^0$  are a multiple of $p$.  
Then   $X:=\{ x \in \Gamma^0 | \  g_i x \  \text{is \ semisimple} \}$
is Zariski dense in $G^0$.
\end{cor}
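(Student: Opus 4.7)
The plan is to deduce Corollary \ref{ss2} from Lemma \ref{ss} by a density argument, after two preliminary observations.

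First, $\Gamma^0$ is Zariski dense in $G^0$. Writing $\Gamma$ as the finite union of its cosets of $\Gamma^0$ and taking Zariski closures, $G$ becomes the corresponding finite union of translates of $\overline{\Gamma^0}$, so $\overline{\Gamma^0}$ has finite index in $G^0$; connectedness of $G^0$ then forces $\overline{\Gamma^0}=G^0$.

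Second, $G^0$ is reductive, so Lemma \ref{ss} applies. Since $\Gamma$ acts absolutely irreducibly on $V$, so does its Zariski closure $G$. The unipotent radical $R_u(G^0)$ is a connected normal unipotent subgroup of $G$; its fixed space $V^{R_u(G^0)}$ is nonzero and $G$-invariant, hence equals $V$, so $R_u(G^0)$ acts trivially. As $G\leq\GL(V)$ is faithful, $R_u(G^0)=1$.

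With these in hand, the orders of $g_i\Gamma^0$ in $\Gamma/\Gamma^0$ agree with those of $g_iG^0$ in $G/G^0$ under $\Gamma/\Gamma^0\cong G/G^0$, so the hypothesis of Lemma \ref{ss} is met. The lemma produces a Zariski open dense $\Omega\subseteq G^0$ on which every $g_ix$ is semisimple. Since the intersection of an open dense set with a dense set in the irreducible variety $G^0$ is still dense, $\Omega\cap\Gamma^0$ is Zariski dense in $G^0$; and the containment $\Omega\cap\Gamma^0\subseteq X$ yields the corollary. The only step requiring any thought is the reductivity of $G^0$, which drops out of the standard fixed-vector property of unipotent groups; everything else is a routine density transfer.
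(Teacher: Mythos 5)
Your proof is correct and follows the same route the paper intends: the paper derives Corollary \ref{ss2} from Lemma \ref{ss} with no further comment (``Applying this to the Zariski closure of $\Gamma$, we immediately obtain''), and you have simply supplied the details it treats as immediate, namely that $\overline{\Gamma^0}=G^0$, that $G^0$ is reductive because an irreducible faithful representation kills the unipotent radical, and that a dense set meets a dense open set in a dense set. Nothing to correct.
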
 

In particular, this implies:

\begin{cor}  \label{char 0}  If $k$ is algebraically closed of characteristic $0$ and
$V$ is an irreducible finite dimensional $k\Gamma$-module, then
$(\Gamma, V)$ is weakly adequate.
\end{cor}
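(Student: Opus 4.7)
The plan is to combine the Jacobson (Burnside) density theorem with Corollary \ref{ss2}, using the fact that in characteristic $0$ the coset-order hypothesis in that corollary is vacuous.

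First I would choose a witnessing basis. Since $k$ is algebraically closed and $V$ is irreducible, Schur's lemma gives $\End_\Gamma(V)=k$, so $V$ is absolutely irreducible and the Jacobson density theorem tells us that the $k$-span of $\rho(\Gamma)$ is all of $\End_k(V)$. Hence I can pick elements $g_1,\ldots,g_{d^2}\in\Gamma$ (with $d=\dim V$) such that $\rho(g_1),\ldots,\rho(g_{d^2})$ form a $k$-basis of $\End_k(V)$.

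Next I would verify that Corollary \ref{ss2} applies. Since $V$ is a faithful irreducible $G$-module in characteristic $0$, the unipotent radical $R_u(G)$ has a nonzero fixed subspace which is $G$-stable, hence equal to $V$; thus $R_u(G)=1$ and $G$ is reductive. Because $p=0$, the hypothesis ``the order of $g_i\Gamma^0$ is not a multiple of $p$'' is vacuous, so Corollary \ref{ss2} provides a subset
\[
X \;=\; \{x\in\Gamma^0 \,:\, g_ix \text{ is semisimple for all } i=1,\ldots,d^2\}
\]
which is Zariski dense in $G^0$.

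Then I would cut $X$ with an open ``basis'' condition. Consider the morphism $G^0\to \End_k(V)^{d^2}$ sending $x\mapsto(\rho(g_1x),\ldots,\rho(g_{d^2}x))$; the condition that the images form a $k$-basis is given by the nonvanishing of a determinant, hence defines a Zariski open subset $Y\subseteq G^0$. Taking $x=1$ shows $Y\neq\emptyset$, and since $G^0$ is irreducible (connected smooth), $Y$ is dense. The density of $X$ and nonemptiness of $Y$ in the irreducible variety $G^0$ force $X\cap Y\neq\emptyset$. Any $x$ in this intersection lies in $\Gamma^0\subseteq\Gamma$, so the elements $g_1x,\ldots,g_{d^2}x$ belong to $\Gamma$, are all semisimple, and their images form a basis of $\End_k(V)$. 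This is precisely weak adequacy.

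The argument is essentially a two-line consequence of the preceding lemmas, so there is no real obstacle; the only subtle point is recording that $G$ is reductive (so Lemma \ref{ss} is applicable) and noting that the ``prime-to-$p$ coset order'' hypothesis degenerates harmlessly when $p=0$.
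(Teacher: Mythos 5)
Your argument is correct and is essentially the deduction the paper intends: the corollary is stated as an immediate consequence of Corollary \ref{ss2}, whose coset-order hypothesis is vacuous in characteristic $0$, combined with Burnside/Jacobson density. Your open determinant-locus refinement (and the explicit check that $G$ is reductive so that Lemma \ref{ss} applies) is just a more hands-on version of the paper's ``span of a dense set is dense, and linear subspaces are closed'' argument used in the proof of Theorem \ref{infinite}(1), so the two proofs are the same in substance.
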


 \begin{lemma}  \label{inf-prim}   Suppose that $V = U \otimes_k W$ where $U$ and $W$
are irreducible  finite dimensional $k\Gamma$-modules and that $\Gamma^0$ acts irreducibly
on $U$ and trivially on $W$.  If $(\Gamma, W)$ is weakly adequate, then
$(\Gamma, V)$ is weakly adequate.
\end{lemma}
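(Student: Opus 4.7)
The plan is to adapt the finite-group argument of Lemma \ref{tensor} to this infinite-group setting, using Zariski density (Corollary \ref{ss2}) in place of the appeal to a normal $p'$-subgroup. Since $\Gamma^0$ acts trivially on $W$, the representation of $\Gamma$ on $W$ factors through the finite quotient $\Gamma/\Gamma^0 \cong G/G^0$, and by Zariski density $G^0$ acts trivially on $W$ as well. Hence the restriction $\rho_U \colon G^0 \to \GL(U)$ is injective (its kernel acts trivially on both $U$ and $W$, hence on $V = U \otimes W$, while $G^0 \subseteq \GL(V)$), and $\rho_U(G^0)$ acts irreducibly on $U$ since $\Gamma^0$ does and $\Gamma^0$ is Zariski dense in $G^0$. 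A linear group acting irreducibly has trivial unipotent radical, so $G^0$ is reductive.

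The first substantive step is to refine weak adequacy of $(\Gamma, W)$: I would produce elements $g_1,\ldots,g_r \in \Gamma$ such that each $g_i\Gamma^0$ has order coprime to $p$ in $\Gamma/\Gamma^0$, with $b_i := \rho_W(g_i)$ spanning $\End(W)$. If $\rho_W(g)$ is semisimple then it has $p'$-order $n$ in $\GL(W)$; writing the order of $g\Gamma^0$ as $m p^e$ with $\gcd(m,p)=1$, the element $g^{p^e}$ has $g^{p^e}\Gamma^0$ of $p'$-order and $\rho_W(g^{p^e})$ has the same order $n$ as $\rho_W(g)$ (since $\gcd(p^e,n)=1$), so it generates the same cyclic subgroup of $\GL(W)$. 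Thus $\rho_W(g)$ is a linear combination of $\rho_W$-images of $p'$-order elements of $\Gamma/\Gamma^0$, yielding the refinement.

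Next, Corollary \ref{ss2} applied to the chosen $g_i$ supplies a Zariski-dense subset $X \subseteq \Gamma^0$ such that $g_i x$ is semisimple in $\GL(V)$ for every $i$ and every $x \in X$. Setting $d = \dim U$, Burnside's lemma applied to the irreducible action of $\Gamma^0$ on $U$ gives that $\rho_U(\Gamma^0)$ spans $\End(U)$, and hence so does $\rho_U(G^0)$. The set of $d^2$-tuples in $(G^0)^{d^2}$ whose $\rho_U$-images form a basis of $\End(U)$ is Zariski open and nonempty, hence dense in the irreducible variety $(G^0)^{d^2}$; it therefore meets the dense set $X^{d^2}$. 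Pick $x_1,\ldots,x_{d^2} \in X$ with $\{\rho_U(x_j)\}_j$ a basis of $\End(U)$. Each $g_i x_j \in \Gamma$ is then semisimple in $\GL(V)$, with $\rho_V(g_i x_j) = \rho_U(g_i)\rho_U(x_j) \otimes b_i$; left multiplication by the invertible $\rho_U(g_i)$ sends the basis $\{\rho_U(x_j)\}_j$ to another basis of $\End(U)$, and the $b_i$ span $\End(W)$, so $\{\rho_V(g_i x_j)\}_{i,j}$ spans $\End(V) = \End(U) \otimes \End(W)$, establishing weak adequacy of $(\Gamma, V)$.

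The chief technical point is the first step: Corollary \ref{ss2} requires $p'$-order in the quotient $\Gamma/\Gamma^0$, whereas weak adequacy of $(\Gamma,W)$ only provides elements whose image on $W$ is semisimple. The reduction exploits that a semisimple element of $\GL(W)$ automatically has $p'$-order, so in the cyclic subgroup of $\Gamma/\Gamma^0$ generated by a lift, its $p'$-part maps to a generator of the same cyclic image on $W$.
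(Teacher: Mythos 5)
Your proof is correct and follows essentially the same route as the paper's: choose elements $g_i$ with $\rho_W(g_i)$ semisimple and spanning $\EE(W)$, invoke Corollary \ref{ss2} to get a Zariski-dense $X \subseteq \Gamma^0$ with every $g_ix$ semisimple, and use that the linear span of $X$ equals the span of $G^0$, namely $\EE(U)\otimes kI$, to conclude that the $g_iX$ span $\EE(U)\otimes\EE(W)$. Your preliminary step --- replacing each $g_i$ by a suitable power so that $g_i\Gamma^0$ has order prime to $p$, which is the actual hypothesis of Corollary \ref{ss2} and does not follow formally from semisimplicity of $\rho_W(g_i)$ alone since $\rho_W$ need not be faithful on $\Gamma/\Gamma^0$ --- is a legitimate and welcome addition: the paper passes over this point silently, so you have filled a small gap rather than taken a different route.
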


\begin{proof}  If $(\Gamma, W)$ is weakly adequate, then we can choose finitely many 
$g_i \in \Gamma$ semsimple with $g_i = a_i \otimes b_i \in \GL(U) \otimes GL(W)$
where the span of the $b_i$ is $\EE(W)$.   Let $X$ be the subset of $\Gamma^0$
consisting of all elements $x$ such that $g_ix$ is semisimple for all $g_i$ (take $g_1=1$).
By Corollary \ref{ss2},  $X$ is Zariski dense in $G^0$.   
Thus,  the linear span of $X$ is Zariski dense in the linear span of $G^0$
which is precisely $\EE(U) \otimes kI$.   Thus, $\cup g_iX$ consists of semisimple
elements and contains a basis for $\EE(V) = \EE(U) \otimes \EE(W)$.  
\end{proof} 

We now prove Theorem \ref{infinite}.

\begin{proof}
First suppose that $p$ does not divide $[\Gamma:\Gamma^0]$.   It follows
by  Corollary \ref{ss2}  that the set of semisimple elements of $\Gamma$
contain a Zariski dense  subset of $G$.  Thus, the linear span
of the semisimple elements of $\Gamma$ is Zariski dense in the linear span
of $G$.  Since linear spaces are closed, it follows that the two sets
have the same linear span, whence the result.   

Next suppose that $p$ does not divide $d$ and $G/G^0$ is $p$-solvable.
Let $H/G^0$ be a $p$-complement in $G/G^0$.  The exact same proof
as in the previous section shows that $H$ is irreducible on $V$.  
Thus, $\Gamma \cap H$ (which is Zariski dense in $H$) is also irreducible
on $V$.   Now apply (1) to $\Gamma \cap H$.    

Finally consider (3).   Since $V$ is primitive, $\Gamma^0$ acts homogeneously on $V$.
Thus, $V = U \otimes_k W$, where $U$ and $W$ are irreducible $k\Gamma$-modules,
$\Gamma$ acts irreducibly on $U$ and trivially on $W$.   Since $\Gamma/\Gamma^0$ is $p$-solvable and $\Gamma^0$
is trivial on $W$,  $(\Gamma, W)$ is weakly adequate by Theorem \ref{primitive}.  Now apply 
Lemma \ref{inf-prim}.  
\end{proof} 

\section{Composition Factors}

We first prove Theorem \ref{outer}.  As we noted this is essentially in \cite{Gu}.
We sketch the proof indicating in particular how the classification is not required
for the case of $p$-solvable groups.  

\begin{theorem}  Let $G$ be a completely reducible finite subgroup of $\GL_n(k)=\GL(V)$
with $k$ a field of characteristic $p$.  If $H^1(G,k) \ne 0$, then
either $n \ge p$ or $p$ is a Fermat prime and $n = p-1$. 
\end{theorem}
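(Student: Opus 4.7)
The plan is to reformulate the cohomological hypothesis as the existence of a specific kind of element of order $p$ in $G$, and then to bound $n$ below by the degree of its minimal polynomial on $V$, ultimately invoking a Hall--Higman-type result.

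First, I would note that since $G$ acts trivially on $k$, one has $H^1(G,k) = \Hom(G, k_+)$, which in characteristic $p$ is nonzero precisely when $G$ has a normal subgroup of index $p$. In particular $p$ divides $|G|$, so Sylow produces an element $x \in G$ of order $p$. Complete reducibility of $V$ then forces $O_p(G) = 1$: any normal $p$-subgroup $P$ acts completely reducibly on $V$ by Clifford's theorem, but in characteristic $p$ the only irreducible $kP$-module for a $p$-group $P$ is trivial, so $P$ acts trivially and faithfulness gives $P = 1$. Since $x^p = 1$ in characteristic $p$, the minimal polynomial of $x$ on $V$ has the form $(X-1)^d$ with $1 \le d \le p$, and the Jordan form of $x$ forces $n \ge d$.

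It therefore suffices to prove $d \ge p$, or else $d = p-1$ with $p$ a Fermat prime. If $G$ is $p$-solvable this is exactly the content of Hall--Higman's Theorem~B applied with $O_p(G) = 1$: the minimal polynomial of $x$ has degree $p$, unless $p$ is a Fermat prime, a Sylow $2$-subgroup of $G$ is non-abelian, and the degree drops to $p-1$. This portion of Hall--Higman is elementary and independent of the classification of finite simple groups. For general $G$, I would follow the reduction in \cite{Gu}: using $O_p(G) = 1$, analyze the generalized Fitting subgroup $F^*(G)$ to pass to the layer $E(G)$, a central product of quasisimple groups, and then appeal to the classification of small-dimensional modular representations of finite quasisimple groups (which rests on CFSG) to rule out the remaining cases with $d < p-1$, or with $d = p-1$ but $p$ not Fermat. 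The main obstacle is precisely this last classification-theoretic step; the $p$-solvable case bypasses it entirely via Hall--Higman.
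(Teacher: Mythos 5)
Your route is genuinely different from the paper's. The paper reduces to the subgroup generated by elements of order $p$, picks a minimal normal noncentral subgroup $A$, and bounds $n$ directly by a case analysis on the structure of $A$ (elementary abelian, symplectic type, quasisimple of order prime to $p$, quasisimple of order divisible by $p$); it never mentions minimal polynomials or Hall--Higman. Your opening moves (identifying $H^1(G,k)$ with $\Hom(G,k^{+})$ to produce an element of order $p$, killing $O_p(G)$ by complete reducibility, and bounding $n$ below by the degree $d$ of the minimal polynomial) are all correct, and for $p$-solvable $G$ the appeal to Hall--Higman Theorem~B does finish the proof: $d=p$ unless $p$ is a Fermat prime and the Sylow $2$-subgroups are nonabelian, in which case $d\ge p-1$, and chasing $n\ge d$ gives exactly the stated dichotomy. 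For the $p$-solvable case this is a clean alternative, at the cost of quoting a substantial theorem where the paper reproves the relevant special case by hand.

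The gap is in the general case. Your stated reduction --- ``it suffices to prove $d\ge p$, or else $d=p-1$ with $p$ a Fermat prime'' --- discards the hypothesis $H^1(G,k)\ne 0$ once it has produced a single element of order $p$, and the resulting statement about minimal polynomials is false for arbitrary finite groups with $O_p(G)=1$: take $G=\SL_2(p)$ on its natural $2$-dimensional module in characteristic $p$; an element of order $p$ has minimal polynomial $(X-1)^2$, so $d=2<p-1$ for $p\ge 5$. No classification of small-dimensional modular representations can ``rule out the remaining cases with $d<p-1$,'' because they genuinely occur; what saves the theorem in such examples is that $G$ is perfect, so $H^1(G,k)=0$ and the hypothesis fails. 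This is exactly where the paper has to work: in its Case 4 the hypothesis (after reducing to $G$ generated by order-$p$ elements) forces the order-$p$ element to induce an outer, hence field, automorphism of the quasisimple subgroup $A$, and the bound $n\ge 2p$ then comes from that element permuting at least $p$ Frobenius twists in the restriction of $V$ to $A$ --- a module-theoretic argument, not a minimal-polynomial bound for an arbitrary $p$-element. To repair your sketch you would need to choose $x$ outside the normal subgroup of index $p$ and carry that constraint through the reduction to the quasisimple case.
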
   

\begin{proof}  If $p \le 3$, then all we are asserting is that $n \ge 2$ and
the result is clear.  So assume that $p \ge 5$ and $p > n$ with $H^1(G,k) \ne 0$. 

Let $N$ be the normal subgroup generated by elements of order $p$.  
Then $H^1G,k)$ embeds into $H^1(N,k)$ and so we may assume that 
$N=G$.   Let  $A$ be a minimal normal noncentral subgroup of $G$.
We consider four cases:\\
 
Case 1.   $A$ is an elementary abelian $r$-group for some prime $r \ne p$.\\
 
 Then $G$ permutes the weight spaces of $A$ and since $G$ is generated by
 elements of order $p$, some element of order $p$ does not centralize $A$, whence
 it must have an orbit of size $p$ and so $n \ge p$.\\
 
 Case 2.   $A$ is of symplectic type  (i.e $A/Z(A)$ is elementary abelian of order
 $r^{2a}$ for some prime $r \ne p $ with $Z(A)$ of order $r$ if $r$ is odd or of order
 $2$ or $4$ if $r=2$;  moreover, $A$ has exponent $r$ is $r$ is odd and
 has exponent $4$ if $r=2$).   \\
 
 Again, some element $g$ of order $p$ acts nontrivial on $A$.  Thus,
 $g$ embeds in $\Sp(2a,r)$, whence $p \le r^a + 1$ with equality if and only
 $r =2$ and $p$ is a Fermat prime.  Since the minimal
 faithful representation of $A$ in characteristic $p$ is $r^a$, the result follows. 
 
 Case 3.  $A$ is a central quotient of a direct product of quaisisimple subgroups
 and $p$ does not divide $|A|$.  \\
 
 Again some element $g$ of order $p$ acts nontrivially on $A$.  If $g$ does not preserve
 each quasisimple factor of $A$, then there are least $p$ such factors and we easily 
 see that $n \ge 2p$.   So $g$ normalizes each factor of $A$.  Thus, $A$ is quasisimple.
 We can assume that $A$ acts homogeneously (and nontrivially) on $V$ (otherwise,
 we may assume that $g$ permutes the homogeneous factors and so there would
 be at least $p$ of them, whence $n \ge 2p$).     Since
 $p$ does not divide $|A|$,  it follows by Sylow's theorem, that $g$ will normalize
 a Sylow $r$-subgroup of $A$ for each prime $r$ dividing $|A|$.   Thus,
 $g$ will act nontrivially on some Sylow $r$-subgroup of $G$ and the result follows
 from cases 1 and 2.\\
 
 Case 4.  $A$ is a central quotient of a direct product of quaisisimple subgroups
 and $p$ does   divide $|A|$.  \\
 
 Unfortunately, we do not have a proof without the classification (although we suspect
 there is one).     We argue as in case 3.  Now apply \cite[Theorem B]{Gu} to conclude
 that $A$ is of Lie type in characteristic $p$.  It follows that $g$ must induce a field
 automorphism and this forces $n \ge 2p$ (one further possibility is that $A = J_1$
 with $p=11$, but then $A$ has no outer automorphisms of odd order).  
 \end{proof}
 
 Now Theorem \ref{outer} follows immediately (if there is a composition factor
 of order $p$, there will be a normal subgroup $N$ of $G$ with $H^1(N,k) \ne 0$
 and $N$ is still completely reducible). 
 
 An immediate corollary is:
 
 \begin{cor}  \label{strong}  Let $G$ be a completely reducible $p$-solvable subgroup of $\GL_n(k)=\GL(V)$
with $k$ a field of characteristic $p$.  If $p$ divides $|G|$, then 
either $n \ge p $ or $n = p-1 $ with $p$ a Fermat prime.
\end{cor}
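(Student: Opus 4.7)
The plan is to extract Corollary \ref{strong} almost immediately from Theorem \ref{outer}; the only real content is a short structural observation about $p$-solvable groups of order divisible by $p$.

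First I would argue that if $G$ is $p$-solvable and $p$ divides $|G|$, then $G$ must have a composition factor of order $p$. This is a direct consequence of the definition of $p$-solvability: every composition factor of $G$ either has order $p$ or has order prime to $p$. Since $p$ divides $|G|$, it divides the order of at least one composition factor (as $|G|$ is the product of the orders of the composition factors counted with multiplicity), and by the definition that composition factor must then have order exactly $p$.

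Next I would apply Theorem \ref{outer} in its contrapositive form. The theorem states: if $p > n$ and $p$ is not a Fermat prime, or if $p > n+1$, then $G$ has no composition factors of order $p$. Since $V$ is a completely reducible $kG$-module and we have just produced a composition factor of order $p$, both hypotheses of Theorem \ref{outer} must fail. The negation of their disjunction says that $p \le n+1$, and that if $p = n+1$ then $p$ must be a Fermat prime. Splitting into cases: either $p \le n$, which is the conclusion $n \ge p$; or $p = n+1$ with $p$ a Fermat prime, which is the conclusion $n = p-1$ with $p$ a Fermat prime.

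There is really no obstacle here; the entire force of the statement is carried by Theorem \ref{outer}, and Corollary \ref{strong} is just the specialization to the $p$-solvable setting, where the existence of a composition factor of order $p$ is automatic from $p \mid |G|$. It is worth noting, as the paper has already remarked after Theorem \ref{outer}, that in the $p$-solvable case Theorem \ref{outer} itself does not depend on the classification of finite simple groups, so this corollary is classification-free as well.
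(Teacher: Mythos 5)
Your proof is correct and follows exactly the route the paper intends: the paper states Corollary \ref{strong} as an immediate consequence of Theorem \ref{outer}, and your argument simply spells out why it is immediate (a $p$-solvable group of order divisible by $p$ has a composition factor of order $p$, so the contrapositive of Theorem \ref{outer} applies). Nothing is missing.
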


 We now prove  Theorem \ref{adequate2}.

 \begin{proof}  Assume that $p > \dim V$ (or $p > \dim V + 1$ if $p$ is a Fermat prime)
 and that $G$ is an irreducible subgroup of $\GL(V)$ as in the hypotheses.
 By the corollary $G$ is in fact a $p'$-group, whence $(G,V)$ is adequate. 
 \end{proof}

\section{Induced Modules}

Let $k$ be an algebraically closed field of characteristic $p >  0$.  
Suppose that $V = \mathrm{Ind}_K^G (W)$.   Let $g_i$ be a set of
coset representatives for the cosets of $K$ in $G$.   So
we can write $V = W_1 \oplus \ldots \oplus  W_m$ where $m=[G:K]$
and  $W_i = g_i \otimes W$.  

So $\EE(V) = \oplus_{ij} \Hom(W_i, W_j)$.   Let $\pi_{ij}$ be the corresponding
projection from $\EE(V)$ to $\Hom(W_i, W_j)$.   
Note that the set of $g \in G$ such that $\pi_{1j}(g) \ne 0$ is $g_jK$.
This observation yields: 

\begin{lemma} If $(G,V)$ is weakly adequate, then $\pi_{1j}$ maps
the set of $p'$-elements of $g_jK$ to a spanning set of $\Hom(W_1,W_j)$.
In particular, if some coset $g_jK$ contains no $p'$-elements, then
$(G,V)$ is not weakly adequate.
\end{lemma}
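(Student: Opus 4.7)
The plan is to apply the projection $\pi_{1j}$ directly to the spanning statement in condition (4) of weak adequacy. By hypothesis, $\EE(V)$ is spanned by the set $\{\rho(g):g\in G,\ \rho(g)\text{ semisimple}\}$. Since $\pi_{1j}\colon \EE(V)\to \Hom(W_1,W_j)$ is a surjective linear map, the image $\{\pi_{1j}(\rho(g)):g\in G,\ \rho(g)\text{ semisimple}\}$ spans $\Hom(W_1,W_j)$.

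Next I would invoke the observation recorded just before the lemma: $\pi_{1j}(\rho(g))\ne 0$ forces $g\in g_jK$, because the action of $g$ on $1\otimes W=W_1$ lands in $W_j$ only when $gg_1\in g_jK$. So every nonzero contribution in the spanning set comes from an element $g\in g_jK$ whose image $\rho(g)$ is semisimple. Finally, since $G$ is finite (or since we are only considering elements of finite order, as is implicit in the use of the phrase ``$p'$-element''), an element $\rho(g)\in\GL(V)$ of finite order is semisimple in characteristic $p$ precisely when the order of $g$ is prime to $p$. Thus the spanning set is exactly $\pi_{1j}$ applied to the $p'$-elements of the coset $g_jK$, which is the first assertion.

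For the ``in particular'' statement, suppose $g_jK$ contains no $p'$-elements. Then the spanning set just produced is empty, so $\Hom(W_1,W_j)=0$. But $W_1\cong W\cong W_j$ as $k$-vector spaces with $W\ne 0$, so $\Hom(W_1,W_j)\ne 0$, a contradiction. Hence $(G,V)$ cannot be weakly adequate.

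There is no real obstacle here; the only point that requires a moment of care is the identification of semisimple elements of $\rho(G)$ with $p'$-elements of $G$, which is immediate for elements of finite order in characteristic $p$. Everything else is a direct projection of the weak adequacy condition through the block decomposition of $\EE(V)$ afforded by the induced-module structure.
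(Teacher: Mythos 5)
Your argument is correct and is essentially the paper's own: the paper derives the lemma immediately from the observation that $\pi_{1j}(\rho(g))\ne 0$ only for $g\in g_jK$, exactly as you do, and you merely spell out the projection of the spanning set and the identification of semisimple images with $p'$-elements (which, as you note, uses that $V$ is faithful, as it is in all the applications). No further comment is needed.
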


Using this criterion, we can produce many  examples $(G,V)$ which are not 
weakly adequate.  Of course, we want
$V$ to be irreducible and we also want $G$ to be generated by $p'$-elements. 

Here is our first family of examples. 

Let $H$ be any finite group whose order is divisible by $p$ with $H$ generated
by its $p'$-elements.  Let $r$ be a prime not equal to $p$ and let 
$A$ be an irreducible $H$-module such that $H$ has a regular orbit
on  $\Hom(A,k^*)$  (this can be easily arranged - if $r$ is sufficiently large, then
any faithful irreducible module $A$ will have this property).   Set $G=AH$, a
semidirect product. 

Let $W$ be a $1$-dimensional $kA$-module with character $\lambda \in \Hom(A,k^*)$ 
so that $\lambda$ is in a regular $G$-orbit.
Set $V = W_A^G$.  We note that  $V$ is an irreducible $kG$-module of dimension equal to $|H|$
(since $V$ is a direct sum of $1$-dimensional non-isomorphic $kA$-modules permuted
transitively by $H$).  Clearly, $G$ is generated by its $p'$-elements.   If $g \in G$ has order
divisible by $p$ the coset $gA$ has no $p'$-elements, whence:

\begin{theorem} \label{example1}  $(G,V)$ is not weakly adequate.
\end{theorem}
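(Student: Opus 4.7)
The plan is to apply the preceding Lemma: I want to exhibit a coset of $A$ in $G = AH$ that contains no $p'$-element of $G$. Combined with the absolute irreducibility of $V$, this will immediately give that $(G,V)$ is not weakly adequate.

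First I would verify that $V = \mathrm{Ind}_A^G(W)$ is absolutely irreducible of dimension $[G:A] = |H|$. Taking $H$ as a transversal for $A$ in $G$, the restriction $V|_A$ decomposes as $\bigoplus_{h \in H} k_{h\lambda}$, where $(h\lambda)(a) := \lambda(h^{-1}ah)$. Since $r \ne p$, $A$ is a $p'$-group, so $V|_A$ is semisimple; the hypothesis that $\lambda$ lies in a regular $H$-orbit says that the characters $h\lambda$ are pairwise distinct, so $V|_A$ is multiplicity-free. Any $kG$-submodule is then a sum of these one-dimensional $A$-weight spaces, and $H$ permutes them transitively, forcing the submodule to be $0$ or $V$. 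The same transitivity gives $\End_{kG}(V) = k$ (a $G$-endomorphism is scalar on each weight space, with the scalars equalized by the $H$-action), so $V$ is absolutely irreducible. Also $G = AH$ is generated by $p'$-elements: $H$ is so by hypothesis and $A$ is a $p'$-group.

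Second, I would produce a coset of $A$ with no $p'$-element of $G$. Since $p$ divides $|H|$, fix $h \in H$ of order divisible by $p$. Under the quotient map $G \twoheadrightarrow G/A \cong H$ every element of the coset $hA$ maps to $h$, and the order of any element of $G$ is a multiple of the order of its image in the quotient. Hence every element of $hA$ has order divisible by $p$, so no element of $hA$ is a $p'$-element. Applying the preceding Lemma with $K = A$ and $g_j$ a coset representative of $hA$ now yields that $(G,V)$ is not weakly adequate.

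There is no serious obstacle: the construction has been arranged so that the natural projection $G \to H$ carries $p$-divisibility of a coset in $G/A$ to $p$-divisibility of the order of every element in that coset, and the rest is an immediate application of the criterion supplied by the Lemma. The only routine checks are that $V$ is absolutely irreducible and that $G$ is generated by $p'$-elements, both arranged by the hypothesis on the regular orbit of $\lambda$ and the hypothesis on $H$.
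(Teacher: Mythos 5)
Your proof is correct and follows essentially the same route as the paper: irreducibility via the multiplicity-free, transitively permuted $A$-weight-space decomposition, and then the observation that any coset $hA$ with $h\in H$ of order divisible by $p$ contains no $p'$-elements (since orders are detected in $G/A\cong H$), so the induced-module lemma applies. You have merely spelled out details the paper leaves implicit.
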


In particular, we can take $G = AS_3$ where $A$ is elementary abelian of order $25$
with $p=3$ and $\dim V = 6$.    

In fact, we can generalize these examples.   Here is the setup:
\begin{enumerate}
\item   Let $L$ and $T$ be  finite groups each generated by  $p'$-elements.
\item   Let  $W$ be an absolutely  irreducible faithful $kL$-module. 
\item   Let $T_1$ be a subgroup of $T$ of index $t$ such that $T_1$ contains no nontrivial 
normal subgroup of $T$
and such that some coset $xT_1$ of $T_1$ in $T$ contains no $p'$-elements (eg,  if $T_1$
is a proper subgroup of a Sylow $p$-subgroup $P$ of $T$, then let $x \in P \setminus{T_1}$).
\end{enumerate}

Set $G= L \wr T = NT$, where $N=L_1 \times \ldots \times L_m$ with $L_i \cong L$ and $m=[T_1:T]$
Then $G$ acts on $V:= W_1 \oplus \ldots \oplus W_t$ where $W_i \cong W$  ($L_i$ acts as $L$
on $W_i$ and trivially on $W_j$ with $j \ne i$ and $T$ permutes the $W_i$ as it does the coset of $T_1$).  
 We can also describe $V$ as $\mathrm{Ind}_K^G (W_1)$ where $K=NT_1$ with $L_1$ acting on $W_1$
 as $L$ does on $W$ and $(L_2 \times \ldots L_t)T_1$ acting trivially on $W_1$).   
 
 \begin{theorem} \label{example2}  With notation as above, $V$ is  a faithful  irreducible $kG$-module of dimension
 equal to $m \dim W$, $G$ is generated by $p'$-elements and
  $(G,V)$ is not weakly adequate.
 \end{theorem}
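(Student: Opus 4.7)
The plan is to verify four claims: (a) $V$ is an irreducible $kG$-module, (b) $V$ is faithful, (c) $G$ is generated by $p'$-elements, and (d) $(G,V)$ is not weakly adequate. The dimension equality $\dim V = m \dim W$ is immediate from the realization $V = \mathrm{Ind}_K^G(W_1)$ together with $[G:K] = [T:T_1] = m$.

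For (a), I would restrict $V$ to the base group $N$. Each summand $W_i$ is $N$-irreducible with $L_i$ acting as $L$ on $W$ and the remaining factors acting trivially; since $W$ is faithful for $L$, the $N$-modules $W_i$ have pairwise distinct kernels in $N$ and so are pairwise non-isomorphic. They are thus the full list of homogeneous $N$-components of $V$, and $T$ permutes them according to its transitive action on $T/T_1$. Any $G$-submodule is $N$-stable, hence a sum of homogeneous components, and also $T$-stable, so transitivity forces it to be $0$ or $V$. For (b), the kernel of the $T$-action on $\{W_1, \ldots, W_m\}$ is the core of $T_1$ in $T$, which is trivial by hypothesis. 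Hence any element of $\ker(G \to \GL(V))$ lies in $N$, and then vanishes componentwise because each $L_i$ acts faithfully on $W_i$. Claim (c) is immediate: $N$ is generated by $p'$-elements since each $L_i \cong L$ is, and $T$ is generated by $p'$-elements by hypothesis, so $G = NT$ is as well.

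The substance of the argument lies in (d). I would apply the induced-module lemma to $V = \mathrm{Ind}_K^G(W_1)$ with $K = NT_1$. The cosets of $K$ in $G$ are parametrized by the cosets of $T_1$ in $T$ via $xK \leftrightarrow xT_1$. Using the semidirect product structure $G = N \rtimes T$, any element of $xK = xNT_1$ can be written as $xnt_1 = (xnx^{-1})(xt_1)$, that is, as $n' y$ with $n' = xnx^{-1} \in N$ and $y = xt_1 \in xT_1$. Its image in $G/N \cong T$ is $y \in xT_1$, and the order of any group element is divisible by the order of its image in any quotient. Choosing $x \in T$ so that $xT_1$ contains no $p'$-elements (as provided by the hypothesis on $T_1$), every $y = xt_1$ has order divisible by $p$, and hence so does every element of $xK$. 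Thus the coset $xK$ of $K$ in $G$ contains no $p'$-elements, and the induced-module lemma immediately yields that $(G,V)$ is not weakly adequate. No step is a genuine obstacle; the only conceptual move is promoting the coset condition on $T_1$ in $T$ to one on $K$ in $G$ by reducing modulo $N$.
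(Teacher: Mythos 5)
Your proposal is correct and follows essentially the same route as the paper: irreducibility via the pairwise non-isomorphic $N$-components permuted transitively by $T$, faithfulness via the triviality of the core of $T_1$, and non-adequacy by applying the induced-module lemma to the coset $xK = xT_1N$, which contains no $p'$-elements. The only difference is that you spell out the reduction modulo $N$ justifying that last coset claim, which the paper states without comment.
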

 
 \begin{proof}   Since the $W_i$ are nonisomorphic irreducible $kN$-modules and $T$ permutes
 them transitively, $V$ is irreducible.   Since $L$ and $T$ are generated by $p'$-elements, so is $G$. 
 Since $T_1$ contains no nontrivial normal subgroup of $T$, the kernel of this representation
 would be contained in $N$.   Clearly $N$ acts faithfully. 
 Since the coset  $xT_1N$ contains no $p'$-elements, the result follows.
 \end{proof}
 
 Using the result of \cite{GG} for any odd prime $p$, we can find
 a sufficiently large $q$ with $p$ exactly dividing $q-1$ so that  for  $T = \PSL_2(q)$  
 and $T_1$ a dihedral subgroup of order $2p$, we can  find a $t \in T$ with $tT_1$ containing 
 no $p'$-elements.
 
 This allows us to give a negative answer to Richard Taylor's question.

  \begin{theorem}  \label{taylor question} Let $k$ be an algebraically closed field of characteristic  $p$.  Let $T = \PSL_2(q)$ and 
  let $T_1$ be a subgroup of $T$ isomorphic to a dihedral group of order $2p$ as above. 
  Let $L$ be a cyclic group order $2$ and let $W$ be the nontrivial $1$-dimensional
  $kL$-module.  Set $G = L \wr_{T_1} T$, $N=L \times \ldots \times L$ of order $2^m$ with $m=[T_1:T]$.  
  Let $T_1$ act trivially on $W$.  
  Set $V = \mathrm{Ind}_{K}^G (W)$ where $K=NT_1$  Then
  \begin{enumerate}
  \item   $V$ is an absolutely irreducible $kG$-module of dimension $m$ (and so prime to $p$); 
  \item   $G$ satisfies conditions (1), (2) and (3) of the introduction;  and
  \item    $G$ is not adequate.
  \end{enumerate}
  \end{theorem}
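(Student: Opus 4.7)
My plan is to verify the three items of the statement in turn by (a) computing $\dim V$ and applying Clifford theory for absolute irreducibility, (b) reducing each of the cohomological vanishings of the introduction to a statement about $T$ or $T_1$ via the Lyndon--Hochschild--Serre spectral sequence applied to the normal $p'$-subgroup $N$, and (c) invoking the coset criterion stated at the start of this section to preclude weak adequacy. Throughout I will use crucially that $p$ is odd and that $N$ is a $2$-group, so that LHS collapses in the $N$-direction for every $kG$-module of interest.

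For item (1), Frobenius reciprocity gives $\dim V = [G:K] = m$, and $m$ is coprime to $p$ by the choice of $q$ (with $p$ exactly dividing both $|T|$ and $|T_1|=2p$). For irreducibility I plan to use Clifford's criterion: the character $\chi$ of $W$ restricted to $N=L_1\times\cdots\times L_m$ is the sign character on the first factor and trivial on the others; its $G$-orbit consists of the $m$ analogous characters obtained by singling out the other factors; and its stabilizer in $G$ is exactly $K=NT_1$ because $T$ permutes the factors of $N$ as it permutes $T/T_1$. Hence $V=\mathrm{Ind}_K^G W$ is absolutely irreducible of dimension $m$.

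For item (2), condition (2) from the introduction is already in hand, and condition (1) is easy: LHS yields $H^1(G,k)=H^1(T,k)=\Hom(T,k^+)=0$ because $T=\PSL_2(q)$ is perfect. The part I expect to be the main technical step is condition (3). My plan is to apply LHS to get $H^1(G,V\otimes V^*)=H^1\bigl(T,(V\otimes V^*)^N\bigr)$ and then identify the $N$-invariants explicitly: since $V|_N$ decomposes as a sum of $m$ pairwise distinct $1$-dimensional characters $\chi_1,\ldots,\chi_m$, the invariants $(V\otimes V^*)^N$ are spanned by the diagonal elements $e_i\otimes e_i^*$, and $T$ permutes this basis exactly as it permutes $T/T_1$. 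Thus $(V\otimes V^*)^N\cong k[T/T_1]$, and Shapiro's lemma reduces the question to the vanishing of $H^1(T_1,k)=\Hom(T_1^{\mathrm{ab}},k^+)$; since $T_1\cong D_{2p}$ with $p$ odd has abelianization $C_2$ and $k^+$ has no $2$-torsion, this is immediate.

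For item (3) I plan to apply the projection lemma at the start of this section. The construction (via the variation of Theorem \ref{jgt} recorded just before the theorem) guarantees an element $t\in T$ such that the coset $tT_1$ contains no $p'$-element of $T$. Any $p'$-element of $G$ maps to a $p'$-element of $T$ under $G\to G/N=T$, so the coset $tK=N\cdot tT_1$ contains no $p'$-elements of $G$ at all, and in particular no semisimple ones. The lemma then forces $(G,V)$ to fail weak adequacy, so condition (4) of the introduction fails and $(G,V)$ is not adequate. The only step requiring genuine computation is the cohomology calculation in the previous paragraph; everything else is assembling pieces already on the table.
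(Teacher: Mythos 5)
Your proposal is correct and follows essentially the same route as the paper: irreducibility via Clifford theory with stabilizer $K$, the coset criterion (the coset $tK$ maps into $tT_1$, hence contains no $p'$-elements) to rule out weak adequacy, and the reduction of $H^1(G,V\otimes V^*)$ through the $N$-invariants $(V\otimes V^*)^N\cong \mathrm{Ind}_K^G(k)$ and Shapiro's lemma to $H^1(T_1,k)=0$. The only cosmetic difference is that the paper gets $H^1(G,k)=0$ directly from $G$ being generated by $p'$-elements rather than from perfectness of $T$ plus LHS, and it phrases the key decomposition as $U=C_U(N)\oplus[N,U]$ rather than as vanishing of $H^1(N,U)$; these are equivalent.
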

  
  \begin{proof}  As we have seen above,  the first condition holds and $(G,V)$ is not weakly adequate by
  the construction   Clearly $p$ does not divide $m =\dim V$.  By construction $G$
  is generated by $p'$-elements.   So it remains to that show $H^1(G, V \otimes V^*)=0$.
  
  Set $U = V \otimes V^*$.  Since $N$ is a normal $p'$-group, it follows that
  $U = C_U(N) \oplus [N,U]$ where $C_U(N)$ are the fixed points of $N$ on $U$ and $[N,U]$
  is the submodule generated by all nontrivial irreducible $N$-submodules.  By the inflation restriction sequence,
  it follows that $H^1(G,[N,U])=0$.   Note that $ \dim C_U(N) = m$ and indeed $C_U(N)$ contains $U_1:=W \otimes W^*$
  and the stabilizer of $U_1$ in $G$ is $NT_1$.  Thus,  $C_U(N) \cong \mathrm{Ind}_K^G (k)$.
  So by Shapiro's Lemma,  $H^1(G, C_U(N)) \cong H^1(K, k) \cong H^1(T_1, k) =0$.
  \end{proof} 
  
  One can also produce examples showing that Taylor's question has a negative answer with $p=2$ as well.
  For example, we can take $T=\PSL_2(137)$   and $T_1 =  A_4  \le T$ and $L$ cyclic of order 
  $3$ with $W$ a $1$-dimensional nontrivial $L$-module. 
   
  Here is a variation of Taylor's question:
  
  \begin{question} \label{taylorq2}  Let $V$ be an absolutely irreducible primitive $kG$-module.  If
  $(G,V)$ satisfies (1), (2) and (3) of the introduction,  is $(G,V)$ adequate?
  \end{question}

  Now suppose that $G$ is $p$-solvable.  Let $V$ be an irreducible $kG$-module.  If $N$ is a noncentral
  normal $p'$-subgroup of $G$ that acts homogeneously, then as usual we can write $V = U \otimes_k W$.
  By Lemma \ref{tensor} and the remark following it,  $(G,V)$ is weakly adequate if and only if $(G/N, W)$ is.
  Thus, if this is the case, the problem reduces to a smaller module.  So we may assume that no 
  noncentral normal $p'$-subgroup acts homogeneously.   In this case, set $N=O_{p'}(G)$ (the largest normal
  $p'$-subgroup).   Then $V = V_1 \oplus \ldots \oplus V_m$ with $m > 1$ where the $V_i$ are the $kN$-components
  of $V$.  Thus,  $V = \mathrm{Ind}_K^G (V_1)$ where $N \le K$.   We ask:

  \begin{question} \label{Q4}    If $G$ is $p$-solvable and every coset $gK$ of $K$ contains a semisimple element, is $(G,V)$
  weakly adequate?
  \end{question}
 
 If the answer is yes, then we have an essentially complete answer as to when an absolutely irreducible
 $kG$-module $V$ is weakly adequate for $G$ a $p$-solvable group.
   
\section{Acknowledgments}

 The  author was partially supported by the NSF
 grant DMS-1001962.  Some of this work was carried out at the  Institute for Advanced Study.
 The author would like to thank Inna Capdeboscq for her help  on constructing the first examples
 which were not weakly adequate, John Thompson for pointing out his article \cite{JGT}, Simon Guest  and
 Daniel Goldstein for help with some MAGMA computations and Richard Taylor for many helpful  comments 
 and questions.   Finally, we would like to thank the referee for their careful reading and helpful comments.

\end{document}